\definecolor{darkgreen}{rgb}{0,0.35,0}
\definecolor{darkblue}{rgb}{0,0,0.6}
\declaretheorem[numberwithin=section]{theorem}
\declaretheorem[numberlike=theorem]{lemma}
\declaretheorem[numberlike=theorem]{corollary}
\declaretheorem[numberlike=theorem]{proposition}
\newtheoremstyle{parentheses}{\medskipamount}{\medskipamount}{\itshape}{}{}{}{ }{\thmnumber{(#2)}}
\theoremstyle{parentheses}
\newtheorem{claim}{}
\makeatletter\@addtoreset{claim}{theorem}\makeatother
\renewcommand{\epsilon}{\varepsilon}
\renewcommand{\phi}{\varphi}
\newcommand{\spn}[1]{\langle #1\rangle}
\title[Extremal functions for matroids]{The extremal functions of classes of matroids of bounded branch-width}
\author{Rohan Kapadia}
\address{Department of Computer Science and Software Engineering, Concordia University, Montreal, Quebec, Canada}
\email{rohan.f.kapadia@gmail.com}
\subjclass[2010]{05B35}
\date{March 26, 2015; revised January 24, 2016}
\begin{document}

\begin{abstract}
For a set of matroids $\mathcal{M}$, let $ex_\mathcal{M}(n)$ be the maximum size of a simple rank-$n$ matroid in $\mathcal{M}$.
We prove that, for any finite field $\mathbb{F}$, if $\mathcal{M}$ is a minor-closed class of $\mathbb{F}$-representable matroids of bounded branch-width, then $\lim_{n \rightarrow \infty} ex_\mathcal{M}(n) / n$ exists and is a rational number, $\Delta$.
We also show that $ex_\mathcal{M}(n) - \Delta n$ is periodic when $n$ is sufficiently large and that $ex_\mathcal{M}$ is achieved by a subclass of $\mathcal{M}$ of bounded path-width.
\end{abstract}

\maketitle

\section{Introduction}

A classic theorem of extremal graph theory is Tur\'an's theorem, which tells us the maximum number of edges in a simple $n$-vertex graph with no $K_r$ subgraph, and determines the graphs achieving the maximum.
Much recent work has gone into the related extremal problem for graph minors: given a proper minor-closed class of graphs $\mathcal{G}$, what is the maximum number $ex_\mathcal{G}(n)$ of edges in a simple $n$-vertex graph in $\mathcal{G}$?
It was first proved by Mader \cite{Mader} that this number is bounded by a linear function of $n$.
The exact extremal function is known for several particular classes of graphs; see for example \cite{SongThomas}.
The best-known case is that of the planar graphs $\mathcal{P}$, where $ex_\mathcal{P}(n) = 3n-6$ for $n \geq 3$.
A more interesting example is the class $\mathcal{G}$ of graphs with no $K_{3,3}$-minor, for which we notice a certain periodic behaviour for $n \geq 2$:
\[ ex_\mathcal{G}(n) = \begin{cases} 3n - 5, & \text{ if } n \equiv 2 \pmod 3 \\
                                     3n - 6, & \text{ otherwise.}
                       \end{cases} \]
This example illustrates the general principle governing extremal functions of minor-closed classes.
In recent work with Sergey Norin~\cite{KapadiaNorin} we show, for any proper minor-closed class of graphs $\mathcal{G}$, that $\lim_{n \rightarrow \infty} ex_\mathcal{G}(n)/n$ exists and is a rational number, $\Delta$, and that $ex_\mathcal{G}(n) - \Delta n$ is periodic when $n$ is large enough, and we characterize certain extremal graphs.
In this paper, we take the first step towards extending these facts from graphs to matroids.
In fact, the techniques we use are actually matroidal versions of the methods used for minor-closed classes of graphs of bounded tree-width in \cite{KapadiaNorin}.

For a matroid $M$, we write $\epsilon(M)$ for the number of points (rank-one flats) in $M$, or equivalently, the size of the simplification of $M$.
We define the \emph{extremal function} $ex_\mathcal{M}$ for a set of matroids $\mathcal{M}$ by setting
\[ ex_\mathcal{M}(n) = \max\{\epsilon(M) : M \in \mathcal{M}, r(M) = n\}, \]
where the function takes the value $\infty$ if the maximum does not exist.
A class of matroids $\mathcal{M}$ is called \emph{linearly dense} if there is a number $c$ such that $ex_\mathcal{M}(n) \leq c n$ for all $n \geq 0$.
It was proven in Geelen and Whittle \cite{GeelenWhittle} that a minor-closed class is linearly dense if and only if it does not contain all simple rank-two matroids and does not contain all graphic matroids.
We will focus on a particular type of linearly dense class.
Given a finite field $\mathbb{F}$, we look at minor-closed classes of $\mathbb{F}$-representable matroids of bounded branch-width (branch-width will be defined later).
These classes are linearly dense because the set of graphic matroids has unbounded branch-width. In fact, Geelen, Gerards and Whittle \cite{GeelenGerardsWhittle2007} have shown that a minor-closed class of $\mathbb{F}$-representable matroids has bounded branch-width if and only if it does not contain all the planar graphic matroids.

We prove the following theorem, which, along with \autoref{thm:path} that appears in the last section, confirms special cases of Conjectures~7.6, 7.7, and 7.8 of \cite{GeelenGerardsWhittle2015}.

\begin{theorem} \label{thm:maintheorem}
 For each finite field $\mathbb{F}$ and each minor-closed class $\mathcal{M}$ of $\mathbb{F}$-representable matroids of bounded branch-width, there are integers $p$ and $m$ and rational numbers $\Delta$ and $a_0, \ldots, a_{p-1}$ such that $ex_\mathcal{M}(n) = \Delta n + a_i$ whenever $n \equiv i \pmod p$ and $n > m$.
\end{theorem}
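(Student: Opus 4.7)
The plan is to adapt the bounded tree-width methodology of \cite{KapadiaNorin} from graphs to matroids, with matroidal branch decompositions playing the role of tree decompositions and boundary subspaces over $\mathbb{F}$ replacing vertex separators. Fix an integer $k$ such that every $M \in \mathcal{M}$ has branch-width at most $k$. The starting point is that any $M \in \mathcal{M}$ of sufficiently large rank $n$ admits, via a standard balancing argument applied to a width-$k$ branch decomposition, a partition $(A,B)$ of $E(M)$ with connectivity $\lambda_M(A) \leq k$ and with $r_M(A), r_M(B)$ both of order $n$.

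The \emph{type} of such a separation is the data of the decorated restrictions $M|A$ and $M|B$ together with the common guts subspace, whose dimension is at most $k$; because $\mathbb{F}$ is finite and this dimension is bounded, the type ranges over a finite set $\mathcal{T}$. For each $\tau \in \mathcal{T}$ I would introduce an auxiliary extremal function $g_\tau(r)$ giving the maximum $\epsilon$-count among decorated matroids of boundary type $\tau$ and rank $r$ whose underlying matroid lies in $\mathcal{M}$. The balanced separation then yields a subadditive inequality
\[ ex_\mathcal{M}(n) \leq \max_{\tau \in \mathcal{T}} \max_{a + b = n + O(1)} \bigl( g_\tau(a) + g_{\bar\tau}(b) \bigr) + O(1), \]
while gluing two compatibly decorated matroids along their boundaries supplies matching lower bounds for concatenation. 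A Fekete-type argument applied to the finite collection $\{g_\tau\}$ then establishes the existence of $\Delta := \lim_{n \to \infty} ex_\mathcal{M}(n)/n$; pigeonhole over $\mathcal{T}$ picks out an ``extremal'' type $\tau^*$ attaining this limit, and concatenating two extremal $\tau^*$-decorated matroids of rank $r$ into a genuine member of $\mathcal{M}$ of rank $\approx 2r$ pins down $\Delta$ as a rational number of the form (density increment)/(rank increment).

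For the periodicity assertion, I would study the deviation $h(n) := ex_\mathcal{M}(n) - \Delta n$, which the previous analysis forces to be bounded. Each large $n$ determines an optimal type $\tau(n) \in \mathcal{T}$ and split $n = a(n) + b(n)$; the claim to establish is that these choices stabilise modulo finitely many patterns as $n \to \infty$, producing a linear recurrence for $h$ whose finitely many inhomogeneous terms depend only on $n$ modulo some $p$ determined by $\mathcal{T}$. This last step is the main obstacle: extracting a genuine period-$p$ rule from bounded fluctuation requires showing that asymptotically extremal matroids share a common recursive skeleton, a structural uniqueness statement considerably stronger than mere finiteness of types, and I expect most of the technical work of the paper to live exactly here, paralleling the delicate recurrence analysis carried out for bounded tree-width graphs in \cite{KapadiaNorin}.
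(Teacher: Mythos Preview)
Your proposal has a concrete gap at the gluing step. You write that ``gluing two compatibly decorated matroids along their boundaries supplies matching lower bounds for concatenation,'' but $\mathcal{M}$ is only minor-closed, not closed under amalgamation along a common flat. If $M_1$ and $M_2$ are extremal pieces with matching boundary type, there is no reason their generalized parallel connection lies in $\mathcal{M}$; indeed for most interesting minor-closed classes it will not. Without this, the Fekete argument yields only an upper bound on $ex_\mathcal{M}(n)/n$, not convergence, and the rationality claim (``concatenating two extremal $\tau^*$-decorated matroids of rank $r$ into a genuine member of $\mathcal{M}$'') fails outright. The same issue undermines the periodicity step, which you already acknowledge as unfinished.

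The paper circumvents this by never gluing freely: every product it forms is obtained as a \emph{minor} of a configuration already known to be in $\mathcal{F}$. The engine that makes this possible is well-quasi-ordering, which your proposal does not invoke. Concretely: a large configuration is decomposed via a \emph{linked} branch decomposition into a long product $H_1 \times \cdots \times H_p$ of \emph{linked} $q$-patches (linkedness, secured by Tutte's Linking Theorem, is what permits contracting out a middle patch while keeping the outer pieces correctly attached). WQO of $q$-patches over $\mathbb{F}$ of bounded branch-width then supplies a single tile $H$ recurring as a minor infinitely often, and a second WQO argument on triples $(G_1,k,G_2)$ (the paper's \autoref{thm:expand}) shows that once $\mathcal{F}$ contains a product with sufficiently many copies of $H$ sandwiched between $G_1$ and $G_2$, it contains products with \emph{every} number of copies. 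That is what produces both rationality of $\Delta$ and the eventual periodicity of $ex_\mathcal{F}(n)-\Delta n$; the ``recursive skeleton'' you anticipate is exactly this $G_1 \times H^L \times G_2$ structure, but it is extracted by WQO rather than by a recurrence analysis on boundary types.
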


We will prove this theorem by finding a structural characterization of some matroids of extremal size. We show that the extremal size is always attained by a subclass of matroids with a certain path-like decomposition. In fact, this subclass has bounded path-width; we will not use path-width in this paper, but see \cite{GeelenGerardsWhittle2006} for a definition.

The number $\Delta$ given by \autoref{thm:maintheorem} is known as the limiting density of the class $\mathcal{M}$. Eppstein \cite{Eppstein} began a study of the possible values of limiting densities of minor-closed classes of graphs and posed several questions about them.

In matroid theory literature, the extremal function is often called the \emph{growth-rate function} of the class $\mathcal{M}$ and denoted by $h_\mathcal{M}$ or $h(\mathcal{M}, \cdot)$. We are using the graph-theoretic terminology here because of the close connection between the concept for linearly dense matroids and graphs.

In the next three sections, we present the notions of matroid density, of configurations, and of branch decompositions.
In \autoref{sec:rational}, we prove that, for any finite field $\mathbb{F}$, the limiting density of any minor-closed class of $\mathbb{F}$-representable matroids of bounded branch-width is rational, and in the last section we complete the proof of \autoref{thm:maintheorem}.

\section{Density}

The \emph{density} of a matroid $M$ is $d(M) = \epsilon(M) / r(M)$.
So a minor-closed class of matroids $\mathcal{M}$ is linearly dense if there is a number $c$ such that $d(M) \leq c$ for every matroid $M$ in $\mathcal{M}$.
The \emph{limiting density} of a linearly dense class $\mathcal{M}$, denoted $d(\mathcal{M})$, is the minimum real number $d$ such that any rank-$n$ matroid in $\mathcal{M}$ has density at most $(d + o(1))n$ (this is analogous to the limiting density of a class of graphs, as defined in Eppstein \cite{Eppstein}).
That is,
\[ d(\mathcal{M}) = \limsup_{n \rightarrow \infty} \left(\max\{d(M) : M \in \mathcal{M}, r(M) = n\}\right). \]

Let $k$ be a positive integer and $\delta$ a positive real number. A matroid $M$ is called \emph{$(\delta, k)$-pruned} if, for every minor $N$ of $M$ with rank at least $r(M) - k$, we have
\[ \epsilon(M) - \epsilon(N) \geq (d(M) - \delta) (r(M) - r(N)) .\]

We say that a sequence of matroids $\{M_i : i \geq 1\}$ is \emph{pruned} if, for every positive real number $\delta$ and every positive integer $k$, there exists an integer $m$ so that, for all $n \geq m$, $M_n$ is $(\delta, k)$-pruned.

\begin{lemma} \label{lem:prunedsequenceexists}
 If $\mathcal{M}$ is a linearly dense minor-closed class of matroids with limiting density $\Delta$ and $\Delta > 0$, then there is a pruned sequence $\{M_i : i \geq 1\}$ of matroids in $\mathcal{M}$ such that $d(M_i) \rightarrow \Delta$ and $r(M_i) \rightarrow \infty$.
\end{lemma}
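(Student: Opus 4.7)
The plan is to define each $M_j$ through an extremal principle that forces the pruning condition to hold automatically. Choose a positive sequence $\delta_j \to 0$ whose rate I will calibrate at the end. By the $\limsup$ definition of $\Delta$, one can pick integers $n_j \to \infty$ and matroids $L_j \in \mathcal{M}$ with $r(L_j) = n_j$ and $d(L_j) \to \Delta$. I would then let $M_j$ be a matroid in $\mathcal{M}$ of rank at most $n_j$ that maximizes the affine potential
\[
\Phi_j(M) = \epsilon(M) - (\Delta - \delta_j)\, r(M).
\]
Linear density makes the set of realizable pairs $(\epsilon(M), r(M))$ with $r(M) \leq n_j$ finite, so this maximum is attained.

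The extremal property delivers pruning essentially for free. For any minor $N$ of $M_j$, which also lies in $\mathcal{M}$ and has rank at most $n_j$, the inequality $\Phi_j(N) \leq \Phi_j(M_j)$ rearranges as
\[
\epsilon(M_j) - \epsilon(N) \geq (\Delta - \delta_j)\bigl(r(M_j) - r(N)\bigr),
\]
with no restriction on $r(N)$ whatsoever. Comparing with the definition of $(\delta, k)$-pruned, $M_j$ will be $(\delta, k)$-pruned for every $k$ simultaneously as soon as $\delta \geq d(M_j) - \Delta + \delta_j$. Thus, once I establish $d(M_j) \to \Delta$, the sequence will meet the pruning specification for every target $\delta > 0$ and every target $k$ for all sufficiently large $j$.

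For the limits, I would test $\Phi_j$ at $L_j$: the inequality $\Phi_j(M_j) \geq \Phi_j(L_j)$ rewrites as
\[
(d(M_j) - \Delta + \delta_j)\, r(M_j) \geq (d(L_j) - \Delta + \delta_j)\, n_j,
\]
whose right side is essentially $\delta_j n_j$ for large $j$. Since $d(M_j)$ is bounded above by the linear-density constant $c$, this forces $r(M_j)$ to be at least on the order of $\delta_j n_j/(c - \Delta)$; using the complementary bound $r(M_j) \leq n_j$ in the same inequality forces $d(M_j) \geq \Delta - o(1)$. The matching upper bound $d(M_j) \leq \Delta + o(1)$ comes from $r(M_j) \to \infty$ together with the $\limsup$ definition of $\Delta$.

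The hard part will be the calibration of $\delta_j$: it must tend to zero to sharpen the pruning tolerance, yet must satisfy $\delta_j n_j \to \infty$ to keep the rank lower bound unbounded. Any choice such as $\delta_j = n_j^{-1/2}$ will do. With that scale in place, the three estimates above combine to give a pruned sequence of matroids with $r(M_j) \to \infty$ and $d(M_j) \to \Delta$, as required.
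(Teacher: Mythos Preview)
Your argument is correct and is a genuinely different, cleaner route than the paper's. The paper proceeds constructively: starting from a high-density $M'_h$, it iteratively replaces the current matroid by a minor that witnesses a violation of $(\delta_i,k_i)$-prunedness, observes that density strictly increases along this chain, and then argues by a counting contradiction that the chain must terminate before the rank drops below a threshold $n_i$; the terminal matroid is the desired $(\delta_i,k_i)$-pruned $M_i$. Your approach replaces this descent by a single variational step: maximizing the affine potential $\Phi_j(M)=\epsilon(M)-(\Delta-\delta_j)r(M)$ over $\{M\in\mathcal{M}:r(M)\le n_j\}$ makes the pruning inequality an immediate consequence of optimality, valid for \emph{all} minors regardless of rank drop, so no parameter $k$ enters the argument at all. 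The trade-off is that the paper's proof is self-contained once $(\delta_i,k_i)$ are fixed, whereas yours requires the calibration $\delta_j n_j\to\infty$ (e.g.\ $\delta_j=n_j^{-1/2}$) together with the choice of $L_j$ satisfying $d(L_j)\ge\Delta-\delta_j/2$ to ensure the right-hand side of the comparison inequality stays positive; with that bookkeeping in place your argument is shorter and yields the stronger conclusion that each $M_j$ is $(\delta,k)$-pruned for every $k$ simultaneously.
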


\begin{proof}
 Since $\Delta > 0$, there is a sequence of matroids $\{M'_i : i \geq 1\}$ in $\mathcal{M}$ such that $d(M'_i) \rightarrow \Delta$ and $r(M'_i) \rightarrow \infty$.
 
 Let $\{\delta_i : i \geq 1\}$ be a decreasing sequence of positive real numbers that converges to zero.
 Let $\{k_i : i \geq 1\}$ be a strictly increasing sequence of positive integers.
 For each $i$, there is a positive integer $m_i$ such that any matroid in $\mathcal{M}$ with rank at least $m_i$ has density at most $\Delta + \delta_i / 4$. Set $n_i = m_i + k_i$ for each $i \geq 1$.
 
 Consider some pair $(\delta_i, k_i)$. Let $c_i$ be the maximum number of points in any matroid in $\mathcal{M}$ with rank at most $n_i$.
 We can pick an integer $h$ such that $d(M'_h) > \Delta - \delta_i / 4$ and $r(M'_h) > \max\{n_i, 2c_i / \delta_i\}$.
 We shall show that $M'_h$ has an $(\delta_i, k_i)$-pruned minor with rank at least $n_i$.
 We pick a maximal sequence $(N_0, \ldots, N_t)$ of minors of $M'_h$ where
 \begin{enumerate}[(a)]
  \item $N_0 = M'_h$,
  \item $N_j$ is a minor of $N_{j-1}$ with rank at least $r(N_{j-1}) - k_i$, for each $j = 1, \ldots, t$,
  \item $\epsilon(N_{j-1}) - \epsilon(N_j) < (d(N_{j-1}) - \delta_i)(r(N_{j-1}) - r(N_j))$, for each $j = 1, \ldots, t$, and \label{pruning}
  \item $r(N_{t-1}) \geq n_i$.
 \end{enumerate}
 We may assume that $M'_h$ is not $(\delta_i, k_i)$-pruned, so $t \geq 1$.
 Note that (\ref{pruning}) implies that $d(N_t) > \cdots > d(N_0)$. Thus we have 
 \[ \epsilon(N_0) - \epsilon(N_t) \leq \sum_{j = 1}^t (d(N_t) - \delta_i) (r(N_{j-1}) - r(N_j)) \]
 which means
 \[ \epsilon(M'_h) - \epsilon(N_t) \leq (d(N_t) - \delta_i)(r(M'_h) - r(N_t)) .\]
 Suppose that $r(N_t) < n_i$. Then,
 \[ \epsilon(M'_h) - c_i \leq (d(N_t) - \delta_i)r(M'_h) .\]
 On the other hand, since $r(N_t) \geq r(N_{t-1}) - k_i \geq n_i - k_i = m_i$, we know that $d(N_t) \leq \Delta + \delta_i / 4$.
 But $\Delta < d(M'_h) + \delta_i / 4$, so that
 \[ \epsilon(M'_h) - c_i < (d(M'_h) - \delta_i / 2)r(M'_h) .\]
 Equivalently, $\frac{\delta_i}{2} r(M'_h) < c_i$, which contradicts our choice of $h$.
 This proves that $r(N_t) \geq n_i$. Then the maximality of the sequence $N_0, \ldots, N_t$ implies that $N_t$ is $(\delta_i, k_i)$-pruned.
 Let $M_i = N_t$; as we observed above, $d(M_i) \geq d(M'_h) > \Delta - \delta_i / 4$.
 
 Now $\{M_i : i \geq 1\}$ is a sequence of matroids such that $r(M_i) \rightarrow \infty$, $d(M'_i) \rightarrow \Delta$, and $M_i$ is $(\delta_i, k_i)$-pruned for each $i \geq 1$.
 
 The lemma now follows from the fact that for any $\delta, \delta_i > 0$ and positive integers $k, k_i$, if $\delta_i \leq \delta$ and $k_i \geq k$ then any $(\delta_i, k_i)$-pruned matroid is $(\delta, k)$-pruned.
\end{proof}

\section{Configurations}

We present some definitions that partly come from \cite{GeelenGerardsWhittle} but with some modifications. 
Let $\mathbb{K}$ be a field.
A \emph{configuration} is a finite multiset of elements of some $\mathbb{K}$-vector space.
A \emph{subconfiguration} of a configuration $A$ is a configuration that is contained in $A$.
The linear span of a configuration $A$ is denoted $\spn{A}$.

A configuration $A$ is called a \emph{minor} of a configuration $A'$ if there is a linear transformation $\mathcal{L}$ from $\spn{A'}$ to $\spn{A}$ such that $\spn{A} = \mathcal{L}(\spn{A'})$, $\ker(\mathcal{L})$ is the linear span of some subset of $A'$, and $A \subseteq \mathcal{L}(A')$. When this holds, we write $A \overset{\mathcal{L}}{\leftarrow} A'$.

The matroid $M(A)$ \emph{represented} by a configuration $A$ is the matroid with ground set $A$ in which independence is linear independence over $\mathbb{K}$. The following is Theorem~5.4 of \cite{GeelenGerardsWhittle}.

\begin{proposition}
 If $A \overset{\mathcal{L}}{\leftarrow} A'$, then $M(A)$ is obtained from $M(A')$ by contracting a subset $X$ of $\ker(\mathcal{L}) \cap A'$ that spans $\ker(\mathcal{L})$, adding back a loop for each member of $X$, and finally taking the restriction to those elements of $A'$ mapped by $\mathcal{L}$ to $A$. Conversely, for each minor $M$ of $M(A')$, there exists a linear transformation $\mathcal{L}$ and a configuration $A$ such that $M$ is equal to $M(A)$ and $A \overset{\mathcal{L}}{\leftarrow} A'$.
\end{proposition}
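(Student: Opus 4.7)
The plan is to translate between linear maps on $\spn{A'}$ and matroid minor operations, exploiting the fact that for an independent set $X \subseteq A'$, the matroid $M(A')/X$ is represented by the image of $A' \setminus X$ in the quotient $\spn{A'}/\spn{X}$.

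For the forward direction, I would start from $A \overset{\mathcal{L}}{\leftarrow} A'$ and extract an independent set $X \subseteq A' \cap \ker(\mathcal{L})$ with $\spn{X} = \ker(\mathcal{L})$, which exists because $\ker(\mathcal{L})$ is spanned by a subset of $A'$. The map $\mathcal{L}$ factors as $\mathcal{L} = \phi \circ \pi_X$, where $\pi_X \colon \spn{A'} \to \spn{A'}/\spn{X}$ is the quotient and $\phi$ is an isomorphism onto $\spn{A}$ (by the first isomorphism theorem combined with surjectivity of $\mathcal{L}$). Contracting $X$ in $M(A')$ therefore produces the matroid on ground set $A' \setminus X$ represented by $\mathcal{L}(A' \setminus X)$, with loops exactly at $(A' \setminus X) \cap \ker(\mathcal{L})$. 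Adding $|X|$ loops to account for $X$ gives a matroid on the full multiset $A'$, and since $A \subseteq \mathcal{L}(A')$ as multisets, one can restrict to a sub-multiset whose $\mathcal{L}$-image is precisely $A$, recovering $M(A)$.

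For the converse, let $M = M(A')/C \setminus D$ with $C$ independent in $M(A')$ (we may always thin a dependent $C$ by moving redundant elements into $D$), and set $A'' = A' \setminus (C \cup D)$. The naive choice $\mathcal{L} = \pi_C$ can fail to satisfy $\spn{A} = \mathcal{L}(\spn{A'})$ when some vectors in $D$ lie outside $\spn{A'' \cup C}$. To repair this, I would select a sub-multiset $D_1 \subseteq D$ that is independent modulo $\spn{A'' \cup C}$ and satisfies $\spn{A'' \cup C \cup D_1} = \spn{A'}$, then let $\mathcal{L} \colon \spn{A'} \to \spn{A'}/\spn{C \cup D_1}$ be the quotient map and set $A = \mathcal{L}(A'')$ as a multiset. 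The modular choice of $D_1$ guarantees
\[ \spn{A''} \cap \spn{C \cup D_1} = \spn{A''} \cap \spn{C}, \]
which in turn means that a sub-multiset $T \subseteq A''$ is linearly independent modulo $\spn{C}$ if and only if $\mathcal{L}(T)$ is linearly independent. Hence the bijection $A'' \to A$ induced by $\mathcal{L}$ is a matroid isomorphism $M \cong M(A)$, and the conditions defining $A \overset{\mathcal{L}}{\leftarrow} A'$ are now all immediate.

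The main obstacle is this modular bookkeeping in the converse: $\ker(\mathcal{L})$ must absorb just enough of $D$ to make $\mathcal{L}$ surjective onto $\spn{A}$, but not so much that genuine non-loop elements of $A''$ collapse into loops. Everything else is either a direct application of the first isomorphism theorem or routine matroidal translation of linear operations.
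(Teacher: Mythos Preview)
Your argument is correct. The forward direction is the standard factorisation of $\mathcal{L}$ through the quotient by $\spn{X}$, and in the converse your choice of $D_1$ independent modulo $\spn{A'' \cup C}$ does exactly what is needed: the computation $\spn{A''}\cap\spn{C\cup D_1}=\spn{A''}\cap\spn{C}$ is valid (any $v=c+d$ in the left-hand side forces $d\in\spn{D_1}\cap\spn{A''\cup C}=\{0\}$), and from it the equivalence of independence modulo $\spn{C}$ and modulo $\spn{C\cup D_1}$ for subsets of $A''$ follows immediately.

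As for comparison with the paper: there is nothing to compare. The paper does not prove this proposition at all; it simply quotes it as Theorem~5.4 of Geelen, Gerards and Whittle, \emph{Branch-width and well-quasi-ordering in matroids and graphs}. Your proof is the natural linear-algebraic argument and is essentially what one finds in that source.
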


This means that the minor relation on matroids over $\mathbb{K}$ is the same as that on configurations over $\mathbb{K}$, if we ignore the presence of loops and zero vectors. We can therefore work with configurations in place of matroids, since loops are irrelevant to questions of density.

We can extend all the notions of density from matroids to configurations. For a configuration $A$, we define $\epsilon(A) = \epsilon(M(A))$ and $d(A) = d(M(A))$, so $d(A) = \epsilon(A) / \dim(\spn{A})$. For a set $\mathcal{F}$ of configurations, the limiting density of $\mathcal{F}$ is that of the set of matroids $\{M(A) : A \in \mathcal{F}\}$.
We also define the extremal function $ex_\mathcal{F}$ to be that of this corresponding set of matroids. So $ex_\mathcal{F}(n) = \max\{\epsilon(A) : A \in \mathcal{F}, \dim(\spn{A}) = n\}$.

\subsection*{Rooted configurations and patches}

We call a triple $(A, L, R)$ of configurations a \emph{rooted configuration} if there is a configuration $A^*$ that can be partitioned into subconfigurations $A$, $L$, and $R$ such that the sets $L$ and $R$ are both linearly independent in $\spn{A^*}$, $R \subseteq \spn{A \cup L}$, and $L \subseteq \spn{A \cup R}$. We treat $L$ and $R$ as sequences, so their elements have an ordering $L = \{l_1, \ldots, l_{|L|}\}$ and $R = \{r_1, \ldots, r_{|R|}\}$. We call their elements the \emph{left terminals} and the \emph{right terminals} of the rooted configuration, respectively.
For a rooted configuration $H = (A, L, R)$, we write $\widetilde{H}$ to denote the configuration $A$.
Also, to avoid complicated notation, we write $\spn{H}$ for $\spn{A \cup L \cup R}$.
We call a rooted configuration $H$ \emph{spanning} if $\dim(\spn{\widetilde{H}}) = \dim(\spn{H})$.
We call $H = (A, L, R)$ \emph{non-trivial} if $\dim(\spn{H}) > \dim(\spn{L}) = |L|$.

An \emph{isomorphism} between two rooted configurations $H_1 = (A_1, L_1, R_1)$ and $H_2 = (A_2, L_2, R_2)$ is an isomorphism between $\spn{H_1}$ and $\spn{H_2}$ that maps $A_1$ onto $A_2$ and maps the elements of $L_1$ and $R_1$ onto those of $L_2$ and $R_2$, in order.

We call a rooted configuration $H = (A, L, R)$ a \emph{minor} of another one $H' = (A', L', R')$ if there is a linear transformation $\mathcal{L}$ from $\spn{H'}$ to $\spn{H}$ such that $A \overset{\mathcal{L}}{\leftarrow} A'$ (so $\ker(\mathcal{L})$ is the span of some subset of $A'$) and $\mathcal{L}$ maps the elements of $L'$ and $R'$ respectively onto the elements of $L$ and $R$, in order. We write $H \overset{\mathcal{L}}{\leftarrow} H'$.

Let $q$ be a non-negative integer. We define a \emph{$(\leq q)$-rooted configuration} to be a rooted configuration $(A, L, R)$ where $|L| \leq q$ and $|R| \leq q$ and we call it a \emph{$q$-patch} if $|L| = q$ and $|R| = q$.
A $q$-patch $(A, L, R)$ is called \emph{linked} if it has a minor $(A', L', R')$ such that $L'$ and $R'$ are equal as ordered sequences.

\subsection*{Products}

Let $(A, L, R)$ be a $(\leq q)$-rooted configuration. Let $(A_1, A_2)$ be a partition of $A$ into two sets such that $\dim(\spn{A_1 \cup L} \cap \spn{A_2 \cup R}) \leq q$.
Let $X$ be a basis of $\spn{A_1 \cup L} \cap \spn{A_2 \cup R}$. Then we can define the $(\leq q)$-rooted configurations $(A_1, L, X)$ and $(A_2, X, R)$.
We say that $(A, L, R)$ is the \emph{product} of $(A_1, L, X)$ and $(A_2, X, R)$ and we write $(A, L, R) = (A_1, L, X) \times (A_2, X, R)$.

A product is a way to decompose a rooted configuration into two pieces, but we also need a way to compose two rooted configurations into a product when they aren't necessarily contained in the same underlying vector space. 
However, this cannot always be defined uniquely.
Let $H_1 = (A_1, L_1, R_1)$ and $H_2 = (A_2, L_2, R_2)$ be two rooted configurations.
We define $\mathcal{P}(H_1, H_2)$ to be the set of all rooted configurations $H'_1 \times H'_2$ where $H'_1$ is isomorphic to $H_1$ and $H'_2$ is isomorphic to $H_2$.
This set is only non-empty when $|R_1| = |L_2|$ and there is an isomorphism between the spaces $\spn{R_1}$ and $\spn{L_2}$ that maps the elements of $R_1$, in order, to those to $L_2$.

More generally, we write $\mathcal{P}(H_1, \ldots, H_k)$ for the set of all rooted configurations $H'_1 \times \cdots \times H'_k$ where $H'_i$ is isomorphic to $H_i$ for each $i = 1, \ldots, k$.
We call all rooted configurations in this set \emph{products} of $H_1, \ldots, H_k$.
When $H$ is a $q$-patch we write $\mathcal{P}(H^k)$ for $\mathcal{P}(H, \ldots, H)$, the set of products of $k$ copies of $H$.
Products and linked $q$-patches are useful because of the following.

\begin{proposition} \label{prop:contractinglinkedpatch}
 If $H_1, H_2$, and $H_3$ are $q$-patches and $H_2$ is linked, then every element of $\mathcal{P}(H_1, H_2, H_3)$ has a minor in $\mathcal{P}(H_1, H_3)$.
\end{proposition}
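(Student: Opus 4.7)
The key idea is to use the linked structure of $H_2$ to collapse its copy in the product so that its two terminal sequences become identified, and then to delete what remains of the middle.

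Let $H = H'_1 \times H'_2 \times H'_3 \in \mathcal{P}(H_1, H_2, H_3)$, with each $H'_i \cong H_i$. Because $H_2$ is linked, so is $H'_2$, and it has a minor $H''_2 = (A''_2, L''_2, R''_2)$ with $L''_2 = R''_2$ as ordered sequences. Let $\mathcal{L}$ be a linear transformation on $\spn{H'_2}$ realizing this minor, with kernel $\spn{X}$ for some $X \subseteq \widetilde{H'_2}$. Two observations I would record up front: since $\mathcal{L}(l'_{2,i}) = l''_{2,i} = r''_{2,i} = \mathcal{L}(r'_{2,i})$, we have $l'_{2,i} - r'_{2,i} \in \spn{X}$ for each $i$; and since $L''_2$ is linearly independent, $\mathcal{L}$ is injective on $\spn{L'_2}$, so $\spn{L'_2} \cap \spn{X} = 0$ (and symmetrically $\spn{R'_2} \cap \spn{X} = 0$).

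I then take the quotient $\pi : \spn{H} \to \spn{H}/\spn{X}$ and consider
\[
 H^{*} := \bigl(\pi(\widetilde{H'_1}) \cup \pi(\widetilde{H'_3}),\ \pi(L'_1),\ \pi(R'_3)\bigr).
\]
Because $\ker(\pi) = \spn{X} \subseteq \widetilde{H}$, the map $\pi$ together with the discarding of $\pi(\widetilde{H'_2})$ from the ground set exhibits $H^{*}$ as a minor of $H$. Moreover, the product structure of $H$ gives $r'_{1,i} = l'_{2,i}$ and $l'_{3,i} = r'_{2,i}$, which together with $l'_{2,i} - r'_{2,i} \in \spn{X}$ yield $\pi(r'_{1,i}) = \pi(l'_{3,i})$ for each $i$.

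To conclude that $H^{*} \in \mathcal{P}(H_1, H_3)$, I need to check (a) $\pi$ is injective on $\spn{H'_1}$ and on $\spn{H'_3}$, so that $\pi(H'_1) \cong H'_1 \cong H_1$ and $\pi(H'_3) \cong H'_3 \cong H_3$; and (b) the identified sequence $\pi(R'_1) = \pi(L'_3)$ is a basis of $\spn{\pi(\widetilde{H'_1}) \cup \pi(L'_1)} \cap \spn{\pi(\widetilde{H'_3}) \cup \pi(R'_3)}$, so that $H^{*} = \pi(H'_1) \times \pi(H'_3)$. For (a), any $v \in \spn{H'_1} \cap \spn{X}$ lies in $\spn{\widetilde{H'_1} \cup L'_1} \cap \spn{\widetilde{H'_2}}$, which by the product decomposition of $H$ sits inside $\spn{R'_1} = \spn{L'_2}$; since $\spn{L'_2} \cap \spn{X} = 0$, this forces $v = 0$, and the argument for $H'_3$ is symmetric. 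For (b), lifting a vector in the intersection to $u_1 \in \spn{\widetilde{H'_1} \cup L'_1}$ and $u_3 \in \spn{\widetilde{H'_3} \cup R'_3}$ with $u_1 - u_3 \in \spn{X} \subseteq \spn{\widetilde{H'_2}}$ puts $u_1$ in $\spn{\widetilde{H'_1} \cup L'_1} \cap \spn{\widetilde{H'_2} \cup \widetilde{H'_3} \cup R'_3} = \spn{R'_1}$, again by the product structure of $H$.

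The main technical obstacle is (a): I need to be sure that the contraction of $X$ does not accidentally damage the outer factors. This is where the hypothesis that $H''_2$ has $L''_2 = R''_2$ as ordered sequences (rather than, for instance, merely in the same subspace) becomes crucial, since that equality is what forces $\spn{L'_2}$ and $\spn{X}$ to meet only in $0$.
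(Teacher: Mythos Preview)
Your proof is correct and follows the same approach as the paper: take the quotient by the kernel $\spn{X}$ of the linear transformation witnessing that $H'_2$ is linked, extended to all of $\spn{H}$, and discard the image of $\widetilde{H'_2}$. The paper's argument is considerably more terse, simply asserting that the resulting minor lies in $\mathcal{P}(H_1,H_3)$; your careful verification of points (a) and (b), using the product structure to control the intersections $\spn{H'_1}\cap\spn{X}$ and $\spn{\pi(\widetilde{H'_1})\cup\pi(L'_1)}\cap\spn{\pi(\widetilde{H'_3})\cup\pi(R'_3)}$, supplies exactly the details the paper omits.
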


\begin{proof}
Let $H$ be an element of $\mathcal{P}(H_1, H_2, H_3)$.
Write $H_2 = (\widetilde{H_2}, L_2, R_2)$.
There is a linear transformation on $\spn{H_2}$ whose kernel is the span of a subset of $\widetilde{H_2}$ that maps the elements of $R_2$ in order onto those of $L_2$.
We can apply the same linear transformation to the copy of $\spn{H_2}$ in $\spn{H}$, and then extend this linear transformation to a linear transformation $\mathcal{L}$ on $\spn{H}$ whose kernel is the span of a subset of the copy of $\widetilde{H_2}$.
The minor $H'$ of $H$ such that $H' \overset{\mathcal{L}}{\leftarrow} H$ is in $\mathcal{P}(H_1, H_3)$.
\end{proof}

A second useful property of products is that whenever $G$ and $H$ are rooted configurations and $G$ is spanning, any element of $\mathcal{P}(G, H)$ is also a spanning rooted configuration.

\section{Branch decompositions}

Recall that the connectivity function $\lambda_M$ of a matroid $M$ is defined for sets $X \subseteq E(M)$ by $\lambda_M(X) = r_M(X) + r_M(E(M) \setminus X) - r(M)$.
For a configuration $A$ and a subset $X$ of $A$, note that $\lambda_{M(A)}(X) = \dim(\spn{X} \cap \spn{A \setminus X})$.

A \emph{branch decomposition} of a matroid $M$ is a tree $T$ where every vertex has degree one or three and $E(M)$ is a subset of the leaves of $T$.
The set \emph{displayed} by a subtree of $T$ is the set of elements of $E(M)$ in that subtree. A subset $X$ of $E(M)$ is displayed by an edge $e$ of $T$ if it is displayed by one of the components of $T - e$. The \emph{width} of $e$, denoted $\lambda(e)$, is the value of $\lambda_M(X)$ where $X$ is any of the sets displayed by $e$. The \emph{width} of a branch decomposition is the maximum of the widths of its edges and the \emph{branch-width} of a matroid is the smallest of the widths of all its branch decompositions.

We define a branch decomposition of a configuration $A$ to be a branch decomposition of the matroid $M(A)$ and the branch-width of $A$ to be that of $M(A)$.
For a rooted configuration $H$, we define the branch-width of $H$ to be that of $\widetilde{H}$.
It was proved by Geelen, Gerards and Whittle \cite{GeelenGerardsWhittle} that configurations over a finite field with bounded branch-width are well-quasi-ordered by the minor relation.

\begin{theorem}[{\cite[Theorem 5.8]{GeelenGerardsWhittle}}] \label{thm:wqo}
For any finite field $\mathbb{F}$ and natural number $n$, the set of configurations over $\mathbb{F}$ with branch-width at most $n$ is well-quasi-ordered by the minor relation.
\end{theorem}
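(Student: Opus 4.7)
The plan is to encode each configuration of branch-width at most $n$ as a finitely-labeled rooted binary tree and then invoke Kruskal's tree theorem, which gives well-quasi-ordering of such trees under label-preserving embedding. The translation of a tree embedding into a configuration minor is the main technical obstacle.

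Given a configuration $A$ over $\mathbb{F}$ with branch-width at most $n$, fix a branch decomposition and subdivide an arbitrary edge to obtain a rooted binary tree whose leaves are the elements of $A$. For each edge $e$, the displayed set $A_e$ determines a boundary space $B_e = \spn{A_e} \cap \spn{A \setminus A_e}$ of dimension at most $n$. I would choose an ordered basis of each $B_e$, label each leaf with the coordinate vector of its element with respect to the basis of the parent boundary, and label each internal node with the matrix expressing the basis of the parent boundary in terms of the union of bases of its two child boundaries. Because $\mathbb{F}$ is finite and all boundary dimensions are bounded by $n$, both types of labels come from a finite alphabet, which is trivially well-quasi-ordered. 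Kruskal's theorem then yields that the resulting set of labeled binary trees is well-quasi-ordered under label-preserving tree embedding.

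It remains to show that a label-preserving embedding of the encoded tree of $A$ into that of $A'$ implies that $A$ is a minor of $A'$. The strategy is to work inductively from the leaves upward: each leaf of $A$ is mapped to a leaf of $A'$ carrying the same coordinate label, giving (after a change of basis of the ambient space) the same element. At each internal node $u$ of the tree of $A$ mapped to a node $v$ of the tree of $A'$, the label-match should guarantee that, after performing appropriate contractions and deletions in each subtree of $A'$ that is ``skipped'' by the embedding (i.e., lies strictly between $v$ and the images of $u$'s children), the resulting boundary linear algebra agrees with that prescribed by $u$'s label. The skipped subtrees are bounded-branch-width configurations with two boundary spaces of dimension at most $n$, i.e., objects of the same shape as a $(\leq n)$-patch, and collapsing them corresponds to minor operations on $A'$.

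The main obstacle is this collapsing step: a skipped subtree cannot in general be contracted to realize an arbitrary prescribed identification of its two boundaries, because distinct $(\leq n)$-patches are inequivalent. I see two ways to circumvent this. One is to enrich the labels so that each internal node also carries the minor-type of the subconfiguration below it viewed as a rooted $(\leq n)$-patch; a compactness argument over configurations of bounded branch-width keeps the type set well-quasi-ordered, and a stronger label-match then directly yields the required minor. The alternative is a pigeonhole argument in the spirit of \autoref{prop:contractinglinkedpatch}: along any sufficiently long path of skipped nodes in $A'$, some $q$-patch must repeat, producing a linked patch that can be contracted away and allowing the embedding to be rerouted. Either route converts the Kruskal embedding into a concrete sequence of contractions and deletions exhibiting $A$ as a minor of $A'$, completing the proof.
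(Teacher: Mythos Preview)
The paper does not prove \autoref{thm:wqo}; it is quoted verbatim from \cite{GeelenGerardsWhittle}. Your outline has the right shape---the original proof does encode each configuration via a labeled branch-decomposition tree and appeals to a Kruskal-type lemma---but the collapsing step you flag is precisely the crux, and neither of your proposed workarounds closes it. The first fix (recording at each node the minor-type of the rooted subconfiguration below it) is circular: those types form a class at least as rich as the configurations of bounded branch-width themselves, so asserting that this label set is well-quasi-ordered presupposes the theorem. The second fix (pigeonhole along skipped paths to find a repeated, hence linked, patch) fails because a Kruskal embedding gives no lower bound on the length of a skipped segment; every skipped path may be short.

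The ingredient you are missing is the use of \emph{linked} branch decompositions (\autoref{thm:everymatroidhaslinkedbranchdecomposition}) together with Tutte's Linking Theorem. If each configuration is encoded via a linked decomposition, then for any segment of skipped internal nodes the linkedness condition forces $\kappa$ between the two ends to equal the minimum edge-width along that segment, and Tutte's theorem then produces a minor of the skipped piece in which the two boundary bases are identified in the prescribed order. That is exactly what makes the collapsing step go through, and it is the central technical device in \cite{GeelenGerardsWhittle}. Without linkedness, matching the transition-matrix labels at internal nodes controls only the local linear algebra, not the global contractibility of the skipped subconfigurations.
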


A $q$-patch is essentially a configuration with $2q$ distinguished elements.
So we can extend \autoref{thm:wqo} from configurations to $q$-patches by `marking' a set of $2q$ distinguished elements of a configuration. We can do this by going to a larger finite field and gluing non-$\mathbb{F}$-representable matroids onto these elements.

\begin{theorem} \label{thm:rootedwqo}
For any finite field $\mathbb{F}$ and natural numbers $n$ and $q$, the set of $q$-patches over $\mathbb{F}$ with branch-width at most $n$ is well-quasi-ordered by the minor relation.
\end{theorem}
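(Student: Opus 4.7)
The plan is to reduce \autoref{thm:rootedwqo} to the unrooted WQO theorem \autoref{thm:wqo} by encoding the ordered $2q$-tuple of terminals of a $q$-patch into the underlying configuration, using matroid gadgets over a larger field. I would fix a finite extension $\mathbb{F}' \supseteq \mathbb{F}$ large enough to admit $2q$ rooted matroids $(N_1, e_1), \ldots, (N_{2q}, e_{2q})$ over $\mathbb{F}'$ such that each $N_i$ is not representable over $\mathbb{F}$, each has branch-width at most some constant $c_0$, and the collection is pairwise incomparable under the rooted minor order. For a $q$-patch $H = (\widetilde{H}, L, R)$ with terminals $t_1, \ldots, t_{2q}$ (where $t_m = l_m$ for $m \leq q$ and $t_m = r_{m-q}$ for $m > q$), let $\phi(H)$ be the configuration over $\mathbb{F}'$ obtained from $\widetilde{H} \cup L \cup R$ by attaching $N_m$ at $t_m$ via identification of $e_m$ with $t_m$, for each $m$. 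Splicing a branch decomposition of each $N_m$ into a branch decomposition of $\widetilde{H}$ at the corresponding terminal leaf produces a branch decomposition of $\phi(H)$ of width at most $n + c$ for some constant $c$ depending only on $q$ and $c_0$.

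Given an infinite sequence $(H_k)_{k \geq 1}$ of $q$-patches over $\mathbb{F}$ of branch-width at most $n$, I would then apply \autoref{thm:wqo} to the sequence $(\phi(H_k))_{k \geq 1}$ of configurations over $\mathbb{F}'$ of branch-width at most $n + c$, obtaining indices $i < j$ and a linear transformation $\mathcal{L}$ with $\phi(H_i) \overset{\mathcal{L}}{\leftarrow} \phi(H_j)$. The key claim for lifting this to a $q$-patch minor embedding is that, for each $m \in \{1, \ldots, 2q\}$, the copy of $N_m$ attached at the $m$-th terminal of $\phi(H_i)$ must arise under $\mathcal{L}$ as a rooted minor of a gadget in $\phi(H_j)$: it cannot come from the un-gadgeted part of $\phi(H_j)$, which is $\mathbb{F}$-representable, and by the antichain property the only gadget in $\phi(H_j)$ that can contain $(N_m, e_m)$ as a rooted minor is the copy of $N_m$ itself. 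Hence $\mathcal{L}$ sends the $m$-th terminal of $H_j$ to the $m$-th terminal of $H_i$ in order, so $\mathcal{L}$ restricts to the desired $q$-patch minor embedding $H_i \overset{\mathcal{L}'}{\leftarrow} H_j$.

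The main technical obstacle is the construction of the antichain of rooted gadgets $(N_i, e_i)$. Naive candidates such as long lines $U_{2,s}$ with a marked root are totally ordered under the rooted minor relation and so do not give an antichain; the gadgets must be built with auxiliary structure that enforces pairwise incomparability. For a sufficiently large $\mathbb{F}'$, such antichains of any fixed finite size can be arranged, for example by combining several non-$\mathbb{F}$-representable long lines of carefully chosen sizes with a specific pattern of attachments to the root, in such a way that no root-preserving linear transformation can realise one gadget as a minor of another. Once the gadgets are in place, the branch-width bound and the lifting argument above go through as sketched, relying only on the non-$\mathbb{F}$-representability and the antichain property of the gadgets.
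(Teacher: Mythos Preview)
Your overall strategy---encode the ordered terminals via non-$\mathbb{F}$-representable gadgets over a larger field, apply the unrooted WQO theorem, then lift back to a $q$-patch minor---is exactly the paper's. Where you diverge is in insisting on an antichain of rooted gadgets, and this is unnecessary. The paper uses precisely the ``naive'' long lines you dismiss: taking $|\mathbb{F}'| \geq |\mathbb{F}| + 2q$, it $2$-sums a copy of $U_{2,\,|\mathbb{F}|+1+j}$ onto $l_j$ and a copy of $U_{2,\,|\mathbb{F}|+q+1+j}$ onto $r_j$ for $j=1,\ldots,q$ (keeping the basepoints). These $2q$ lines are indeed totally ordered under the minor relation, but they have pairwise \emph{distinct} lengths, and that already suffices. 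The terminal correspondence is forced not by incomparability but by a counting/matching argument: each pendant line of $M_i$ is non-$\mathbb{F}$-representable and so must sit inside some pendant line of $M_j$; each pendant line of $M_j$ can host at most one pendant line of $M_i$ (the gadgets are pairwise separated through the $\mathbb{F}$-representable core); and both $M_i$ and $M_j$ carry exactly one line of each of the $2q$ sizes. The only size-nondecreasing bijection from a set of distinct integers to itself is the identity, so no element of any attached line is deleted or contracted and the basepoints map to basepoints in the correct order. Your antichain construction would also work once carried out, but it turns a one-line choice of gadgets into the ``main technical obstacle'' of the proof for no gain.
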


\begin{proof}
 Let $C_1, C_2, \ldots$ be an infinite sequence of $q$-patches over $\mathbb{F}$. We need to show that there are indices $i, j$ with $i < j$ such that $C_i$ is a minor of $C_j$.
 Let $\mathbb{F}'$ be a finite extension field of $\mathbb{F}$ such that $|\mathbb{F}'| \geq |\mathbb{F}| + 2q$.
 We can view the $q$-patches $C_1, C_2, \ldots$ as $q$-patches over $\mathbb{F}'$ (by applying, component-wise to each vector in the configuration, an embedding of $\mathbb{F}$ onto a subfield of $\mathbb{F}'$).
 For each $C_i = (\widetilde{C_i}, L_i, R_i)$, we let $A_i$ be the configuration $\widetilde{C_i} \cup L_i \cup R_i$. 
 Denote the $j$th element of $L_i$ by $l_j$ and the $j$th element of $R_i$ by $r_j$, for each $j = 1, \ldots, q$.
 We define $M_i$ to be the matroid obtained from $M(A_i)$ by taking repeated $2$-sums as follows. For each $j = 1, \ldots, q$ we do a $2$-sum  with a copy of $U_{2, |\mathbb{F}| + 1 + j}$ with basepoint $l_j$. For each $j = 1, \ldots, q$ again, we do a $2$-sum with a copy of $U_{2, |\mathbb{F}| + q + 1 + j}$ with basepoint $r_j$.
 We do all the $2$-sums without deleting the basepoints.
 Note that none of these lines are representable over $\mathbb{F}$.
 
 Since the $\mathbb{F}'$-representable matroids of branch-width at most $n$ are well-quasi ordered by the minor relation, there are indices $i, j$ with $i < j$ such that $M_i$ is (isomorphic to) a minor $M_j$.
 No elements of the lines we added by $2$-summing can be deleted or contracted from $M_j$ to get $M_i$. So there is a set $X$ in $\widetilde{C_j}$ such that $M_i$ is isomorphic to a restriction of $M_j / X$, by an isomorphism that maps the elements of $L_i$ and $R_i$ to those of $L_j$ and $R_j$, in order.
 The $q$-patch $C_i$ is a minor of the $q$-patch $C_j$.
\end{proof}

\subsection*{Linked branch decompositions}

For two disjoint sets $A, B$ in a matroid $M$, we write $\kappa_M(A, B)$ for the minimum of $\lambda_M(X)$ over all sets $X$ containing $A$ and disjoint from $B$. Clearly, $\kappa_M(A, B) = \kappa_M(B, A)$.

Let $f$ and $g$ be two edges in a branch decomposition $T$ of $M$, let $F$ be the set displayed by the component of $T - f$ not containing $g$, and let $G$ be the set displayed by the component of $T - g$ not containing $f$. Let $P$ be the shortest path of $T$ containing $f$ and $g$. 
The edges $f$ and $g$ are called \emph{linked} if $\kappa_M(F, G)$ is equal to the minimum width of the edges of $P$.
The branch decomposition $T$ is called \emph{linked} if all edge pairs are linked.
It was proved in Geelen, Gerards and Whittle \cite{GeelenGerardsWhittle} that we can always find linked branch decompositions:

\begin{theorem}[{\cite[Theorem 2.1]{GeelenGerardsWhittle}}] \label{thm:everymatroidhaslinkedbranchdecomposition}
 Any matroid of branch-width $n$ has a linked branch decomposition of width $n$.
\end{theorem}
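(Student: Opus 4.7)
The plan is to adapt Thomas's classical argument for linked tree-decompositions of graphs to the matroid setting, substituting Tutte's linking theorem for Menger's theorem. The idea is to select a width-$n$ branch decomposition that is extremal with respect to a suitable measure, and show that a linkedness violation would permit a local rebuild contradicting extremality.

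For a branch decomposition $T$ of $M$ of width $n$, let $\sigma(T) = (a_n, a_{n-1}, \ldots, a_0)$ be its width profile, where $a_k$ is the number of edges of $T$ of width exactly $k$. Among all branch decompositions of $M$, pick $T$ minimizing $\sigma(T)$ in lexicographic order starting from $a_n$; this forces width $n$ and makes large widths as rare as possible. Suppose for contradiction that $T$ is not linked, and choose a non-linked pair of edges $(f,g)$ with joining path $P = P(f,g)$ of minimum possible length; let $F, G$ be the sets displayed on the outside of $f$ and $g$, let $e^* \in P$ realize the bottleneck width $k$ on $P$, and let $X$ be the set displayed by $e^*$ on the $F$-side. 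By non-linkedness, $\kappa_M(F,G) < k$.

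By Tutte's linking theorem applied to $F$ and $G$, there exists a set $Y$ with $F \subseteq Y \subseteq E(M) \setminus G$ and $\lambda_M(Y) = \kappa_M(F,G)$. Using submodularity of $\lambda_M$, I would uncross $Y$ against the leaf partition induced by $T - e^*$ so that $Y$ respects the subtree structure at $e^*$. Then rebuild $T$ by migrating across $e^*$ the leaves witnessing the difference between $X$ and $Y$, producing a branch decomposition $T'$ in which the edge replacing $e^*$ displays $(Y, E(M) \setminus Y)$ and has width $\kappa_M(F,G) < k$.

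The main obstacle is to verify that $\sigma(T') < \sigma(T)$ in lexicographic order: every other edge of $T'$ must have width no larger than its counterpart in $T$, or at least no edge of $T'$ may have width larger than $k$ beyond those already present in $T$. Edges far from $P$ are unaffected, while edges on $P$ require careful control through repeated applications of submodularity. The minimality of $|P|$ is essential here: if the rebuild increased the width at some edge $e' \in P$ above $k$, then $e'$ paired with $f$ or with $g$ would form a non-linked pair with a strictly shorter joining path, contradicting the minimal choice of $(f,g)$. Once this width control is in place, $\sigma(T')$ strictly precedes $\sigma(T)$ in lex order, contradicting the extremality of $T$ and forcing $T$ to have been linked to begin with.
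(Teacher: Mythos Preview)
The paper does not prove this theorem; it is simply quoted from Geelen--Gerards--Whittle, so there is no proof in the paper to compare against. Your overall plan---choose a width-$n$ branch decomposition with lexicographically minimal width profile, and exploit a linkedness violation to rebuild it---is indeed the Thomas-style argument that Geelen--Gerards--Whittle carry out in the cited source. Two steps of your sketch, however, are genuinely off.

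First, invoking Tutte's linking theorem for the separator $Y$ is a misfire. That theorem produces a \emph{minor} $M'$ on $F\cup G$ with $\lambda_{M'}(F)\ge\kappa_M(F,G)$; it says nothing about a set $Y$ in $M$ itself. The set $Y$ you want, with $F\subseteq Y\subseteq E(M)\setminus G$ and $\lambda_M(Y)=\kappa_M(F,G)$, exists directly from the definition of $\kappa_M$ as a minimum over such sets.

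Second, your width-control argument is circular. You claim that if the rebuild raised the width of some $e'\in P$ above $k$, then $(e',f)$ or $(e',g)$ would be a non-linked pair in $T$ with a shorter path. But the hypothesis is about widths in the \emph{new} tree $T'$, while the conclusion you draw is about linkedness in the \emph{old} tree $T$; nothing connects the two. In the Geelen--Gerards--Whittle proof the minimality of $|P|$ plays no role. The width control comes entirely from submodularity of $\lambda_M$: the rebuilt tree is arranged so that each edge $e$ of $T$ displaying $A_e$ gives rise to edges displaying $A_e\cap Y$ and $A_e\cup Y$, and then
\[
\lambda_M(A_e\cap Y)+\lambda_M(A_e\cup Y)\le\lambda_M(A_e)+\lambda_M(Y)<\lambda_M(A_e)+k.
\]
This inequality, together with the strict drop at $e^*$, is what forces $\sigma(T')<_{\mathrm{lex}}\sigma(T)$. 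Your description of the rebuild as ``migrating leaves across $e^*$'' is too vague to support this computation; the actual construction duplicates part of the tree, placing the elements of $Y$ in one copy and those of $E(M)\setminus Y$ in the other, which is what makes the displayed sets come out as $A_e\cap Y$ and $A_e\cup Y$.
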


We can always choose such a linked branch-decomposition so that every leaf of it is actually an element of the matroid.
As we shall see, linked branch decompositions are useful because of Tutte's Linking Theorem (see \cite[Theorem 5.1]{GeelenGerardsWhittle} for a proof):

\begin{theorem}[Tutte's Linking Theorem]
 If $X$ and $Y$ are disjoint subsets in a matroid $M$, then $\kappa_M(X, Y) \geq n$ if and only if there exists a minor $M'$ of $M$ with ground set $X \cup Y$ such that $\lambda_{M'}(X) \geq n$.
\end{theorem}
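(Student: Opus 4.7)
The plan is to prove both implications; $(\Leftarrow)$ follows from monotonicity of $\kappa_M$ under minor operations, while $(\Rightarrow)$ is proved by induction on $|E(M) \setminus (X \cup Y)|$. For $(\Leftarrow)$, I would first verify that for every $e \in E(M) \setminus (X \cup Y)$ both $\kappa_{M \setminus e}(X, Y) \leq \kappa_M(X, Y)$ and $\kappa_{M/e}(X, Y) \leq \kappa_M(X, Y)$: picking a minimiser $Z$ with $X \subseteq Z \subseteq E(M) \setminus Y$ on the side not containing $e$ (which is possible because $\lambda_M(Z) = \lambda_M(E(M) \setminus Z)$), a direct rank calculation gives $\lambda_{M \setminus e}(Z) \leq \lambda_M(Z)$ and $\lambda_{M/e}(Z) \leq \lambda_M(Z)$, with loops and coloops handled separately. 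Iterating over $E(M) \setminus E(M')$ yields $\kappa_{M'}(X, Y) \leq \kappa_M(X, Y)$ for any minor $M'$ of $M$ on a ground set containing $X \cup Y$; when $E(M') = X \cup Y$ the only admissible separator is $X$ itself, so $\kappa_{M'}(X, Y) = \lambda_{M'}(X) \geq n$ forces $\kappa_M(X, Y) \geq n$.

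For $(\Rightarrow)$, the base case $E(M) = X \cup Y$ is trivial: take $M' = M$. For the inductive step, pick $e \in E(M) \setminus (X \cup Y)$; a loop can be deleted and a coloop contracted without changing $\kappa_M(X, Y)$, so assume $e$ is neither. The key claim is that either $\kappa_{M \setminus e}(X, Y) \geq n$ or $\kappa_{M/e}(X, Y) \geq n$. Suppose both fail, with witnesses $Z_1, Z_2 \subseteq E(M) \setminus (Y \cup \{e\})$ each containing $X$. The identities $\lambda_{M \setminus e}(Z_1) = \lambda_M(Z_1) - \alpha$, where $\alpha \in \{0, 1\}$ is $1$ iff $e \notin \operatorname{cl}_M(E(M) \setminus (Z_1 \cup \{e\}))$, and $\lambda_{M/e}(Z_2) = \lambda_M(Z_2) - 1 + \beta$, where $\beta \in \{0, 1\}$ is $0$ iff $e \in \operatorname{cl}_M(Z_2)$, combined with $\lambda_M(Z_i) \geq \kappa_M(X, Y) \geq n$, pin down $\alpha = 1$, $\beta = 0$, and $\lambda_M(Z_1) = \lambda_M(Z_2) = n$. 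Submodularity of $\lambda_M$ applied to $Z_1, Z_2$, plus the fact that both $Z_1 \cap Z_2$ and $Z_1 \cup Z_2$ separate $X$ from $Y$ and so have $\lambda_M \geq n$, forces $\lambda_M(Z_1 \cup Z_2) = n$. Monotonicity of closure transfers both closure facts to the union: $e \in \operatorname{cl}_M(Z_1 \cup Z_2)$ (from $e \in \operatorname{cl}_M(Z_2)$) and $e \notin \operatorname{cl}_M(E(M) \setminus (Z_1 \cup Z_2 \cup \{e\}))$ (from the corresponding fact at $Z_1$, since $E(M) \setminus (Z_1 \cup Z_2 \cup \{e\}) \subseteq E(M) \setminus (Z_1 \cup \{e\})$). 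A direct rank calculation then gives $\lambda_M(Z_1 \cup Z_2 \cup \{e\}) = \lambda_M(Z_1 \cup Z_2) - 1 = n - 1$, contradicting $\kappa_M(X, Y) \geq n$, since $Z_1 \cup Z_2 \cup \{e\}$ is another separator.

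The main obstacle is the bookkeeping in this last argument: one must nail down $\lambda_M(Z_i) = n$ and the precise closure positions of $e$ relative to $Z_1$ and $Z_2$ \emph{before} combining them, so that monotonicity of closure and submodularity of $\lambda_M$ together drop $\lambda_M(Z_1 \cup Z_2 \cup \{e\})$ below $n$. Once the key claim is in hand, the induction delivers a minor $M'$ of $M$ on ground set $X \cup Y$ with $\lambda_{M'}(X) \geq n$, as required.
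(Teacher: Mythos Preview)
The paper does not actually prove Tutte's Linking Theorem; it merely states it and refers the reader to \cite[Theorem~5.1]{GeelenGerardsWhittle} for a proof. So there is no ``paper's own proof'' to compare against here.

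That said, your proposal is correct and is essentially the standard argument (and, in substance, the one in \cite{GeelenGerardsWhittle}): the easy direction uses that $\kappa$ is monotone non-increasing under deleting or contracting an element outside $X \cup Y$, and the hard direction is the single-element induction step showing that for each $e \in E(M) \setminus (X \cup Y)$ at least one of $M \setminus e$, $M/e$ preserves $\kappa_M(X,Y) \geq n$. Your bookkeeping on the closure positions of $e$ relative to the two witnessing separators $Z_1, Z_2$, the use of submodularity to force $\lambda_M(Z_1 \cup Z_2) = n$, and the final drop $\lambda_M(Z_1 \cup Z_2 \cup \{e\}) = n-1$ are all accurate. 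Nothing is missing.
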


\subsection*{Rooted branch decompositions}

A \emph{rooted tree} is a tree whose edges are oriented such that it has precisely one vertex, called the \emph{root}, with indegree zero. The \emph{parent} of a vertex in a rooted tree is its neighbour on the path joining it to the root.
We define the \emph{depth} of a rooted tree to be the maximum distance between a leaf and the root.
A \emph{rooted branch decomposition} of a configuration $A$ is a branch decomposition that is a rooted tree.
Every configuration $A$ of branch-width $n$ has a rooted, linked branch decomposition of width $n$.

\subsection*{Decomposing into a product}

In this subsection, we show that any large enough configuration of bounded branch-width can be written as a product of rooted configurations in a certain way.
When $A'$ is a subconfiguration of a configuration $A$, the \emph{boundary of $A'$ in $A$} is the space $\spn{A'} \cap \spn{A \setminus A'}$.
So the dimension of the boundary of $A'$ is equal to $\lambda_{M(A)}(A')$.

\begin{lemma} \label{lem:decomposeintoproduct0}
 For any positive integers $w$ and $p$ and any configuration $A$ with branch-width at most $w$ such that $|A| > 2^p$, there is a product of $p$ $(\leq w)$-rooted configurations
 \[ (A, L_1, R_p) = (A_1, L_1, R_1) \times \cdots \times (A_p, L_p, R_p) \]
 such that $(A_1, L_1, R_1)$ is spanning, and for all $i = 1, \ldots, p - 1$, $1 \leq |A_i| \leq 2^{i-1}$ and $R_i$ spans the boundary of $A_1 \cup \cdots \cup A_i$ in $A$.
 Moreover, $\kappa_{M(A)}(A_1 \cup \cdots \cup A_i, A_j \cup \cdots \cup A_p) \geq \min\{|R_i|, |R_{i+1}|, \ldots, |R_{j-1}|\}$ for any $i < j$.
\end{lemma}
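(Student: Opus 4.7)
The plan is to extract the product decomposition by peeling off sibling subtrees along a \emph{heavy descent path} in a rooted, linked branch decomposition of $A$.

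First I would take a rooted, linked branch decomposition $T$ of $M(A)$ with width at most $w$ (which exists by the theorem recalled in this section). Starting at the root, I perform a heavy descent: at each vertex, move to the child whose subtree contains the most leaves of $A$. This produces a path $v_0 = \mathrm{root}, v_1, \ldots, v_d$ ending at a leaf $\ell_0 := v_d$. Writing $n_i$ for the number of leaves of $A$ in the subtree rooted at $v_i$, the central combinatorial estimate will be the doubling bound $n_{d-j} \leq 2^j$ whenever $d - j \geq 1$. This follows by induction on $j$: $n_d = 1$, and at any non-root vertex $v_{d-j}$ (which has two children) the heavy-descent rule forces the light sibling to have at most $n_{d-j+1}$ leaves, so $n_{d-j} \leq 2 n_{d-j+1} \leq 2^j$. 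Combined with the hypothesis $|A| > 2^p$ (noting that $n_1 \geq |A|/3$ whether the root is internal or a leaf), the bound forces $d \geq p$, giving room for $p-1$ peeling steps strictly below the root.

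Next I read off the decomposition from this path. Set $A_1 := \{\ell_0\}$; for $j = 1, \ldots, p-2$, let $A_{j+1}$ be the set of leaves in the light sibling subtree at $v_{d-j}$; and let $A_p$ be the remaining leaves (those outside the subtree rooted at $v_{d-p+1}$). Take $L_1 := R_p := \emptyset$, and for each $i = 1, \ldots, p-1$ let $R_i$ be a basis of the boundary $\spn{S_i} \cap \spn{A \setminus S_i}$, where $S_i := A_1 \cup \cdots \cup A_i$; put $L_i := R_{i-1}$ for $i \geq 2$. The set $S_i$ is the leaf set of the subtree rooted at $v_{d-i+1}$, displayed by the edge $e_i := v_{d-i+1} v_{d-i}$ of $T$, so $|R_i| \leq w$. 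The heavy-descent rule gives $|A_i| \leq n_{d-i+1}/2 \leq 2^{i-2} \leq 2^{i-1}$ for $i \geq 2$, and $|A_1| = 1$. The first piece $(A_1, L_1, R_1) = (\{\ell_0\}, \emptyset, R_1)$ is automatically spanning because $R_1 \subseteq \spn{A_1} \cap \spn{A \setminus A_1} \subseteq \spn{A_1}$. A short inductive rank argument, based on the identity $\spn{A_{i+1} \cup R_i} \cap \spn{A \setminus S_{i+1}} = \spn{S_{i+1}} \cap \spn{A \setminus S_{i+1}}$, then confirms that iteratively applying the product definition produces exactly these pieces and recovers $(A, \emptyset, \emptyset)$.

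Finally, the $\kappa$-inequality follows immediately from the linkedness of $T$: for $i < j$, the sets $S_i$ and $A \setminus S_{j-1}$ are the displayed sets on opposite sides of $e_i$ and $e_{j-1}$, and the unique shortest $T$-path containing both edges is the consecutive heavy-path segment $e_i, e_{i+1}, \ldots, e_{j-1}$, so linkedness yields $\kappa_{M(A)}(S_i, A \setminus S_{j-1}) \geq \min\{|R_i|, \ldots, |R_{j-1}|\}$, as required. I expect the main technical obstacle to be the doubling estimate $n_{d-j} \leq 2^j$ together with the depth bound $d \geq p$: the inductive step breaks if $v_{d-j}$ is the root (where an internal root has three children, breaking the factor-of-two bound), so one must verify that $|A| > 2^p$ forces the heavy path to reach depth at least $p$, ensuring that every vertex $v_{d-j}$ used in the construction lies strictly below the root.
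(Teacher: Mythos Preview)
Your proposal is correct and follows essentially the same approach as the paper: take a rooted, linked branch decomposition of width at most $w$, find a root-to-leaf path of length at least $p$, let $A_1$ be the leaf and $A_2,\ldots,A_{p-1}$ the sibling subtrees along the path, choose each $R_i$ as a basis of the boundary of $A_1\cup\cdots\cup A_i$, and read off the $\kappa$-inequality from linkedness. The only differences are cosmetic: the paper picks a \emph{deepest} leaf (so the depth bound $d\ge p$ is immediate from $|A|>2^p$, and $|A_i|\le 2^{i-1}$ follows because the sibling subtree at $v_i$ has depth at most $i-1$), whereas you use a heavy-descent leaf and the doubling estimate; and the paper sets $L_1=R_1$ and $R_p=R_{p-1}$ rather than $L_1=R_p=\emptyset$, though either choice works.
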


\begin{proof}
 Let $A$ be a configuration over some field with branch-width at most $w$ and $|A| \geq 2^p$.
 By \autoref{thm:everymatroidhaslinkedbranchdecomposition}, it has a linked, rooted branch decomposition $T$ of width at most $w$. We can choose it so that every leaf of $T$ is an element of $A$.

 If $T$ has depth less than $p$, then it has fewer than $2^p$ leaves, so $|A| < 2^p$, a contradiction. So $T$ has depth at least $p$.
 We pick a vertex $v_1$ at maximum distance from the root, and consider the set of vertices $\{v_1, \ldots, v_p\}$ where $v_i$ is the parent of $v_{i-1}$ in $T$, for each $i = 2, \ldots, p$. Let $P$ denote the $v_1, v_p$-path of $T$ and write $e_i$ for the edge of $P$ joining $v_i$ to $v_{i+1}$, $i = 1, \ldots, p - 1$.
 For each $i = 1, \ldots, p$, we define $S_i$ to be the maximal subtree of $T$ containing $v_i$ but no other vertex of $P$ and we set $A_i$ to be the set displayed by $S_i$. Then the sets $A_1, \ldots, A_p$ partition $A$ and, for each $i$, the dimension of the boundary of $A_1 \cup \cdots \cup A_i$ is the width of the edge $e_i$ in the branch decomposition, which is at most $w$.

 We pick a basis $R_1$ of the boundary of $A_1$ in $A$ and set $L_1 = R_1$; then $(A_1, L_1, R_1)$ is a $(\leq w)$-rooted configuration and it is spanning.
 Since $S_1$ is a one-vertex tree (the leaf $v_1)$, we have $|A_1| = 1$. 
 For each $i = 2, \ldots, p - 1$, we inductively set $L_i = R_{i-1}$ and let $R_i$ be a basis of the boundary of $A_1 \cup \cdots \cup A_i$ in $A$; then $(A_i, L_i, R_i)$ is a $(\leq w)$-rooted configuration. 
 Finally, we set $R_p$ and $L_p$ equal to $R_{p-1}$ so $(A_p, L_p, R_p)$ is a $(\leq w)$-rooted configuration.
 We have $(A, L_1, R_p) = (A_1, L_1, R_1) \times \cdots \times (A_p, L_p, R_p)$.
 
 The fact that $v_1$ is a leaf of $T$ at the maximum distance from the root means that $S_i$ is a tree of depth at most $i-1$ and so $|A_i| \leq 2^{i-1}$. Since $T$ has no vertex of degree two, every tree $S_i$ has a leaf so $|A_i| \geq 1$. 
 
 For any $i < j$, the set $A_1 \cup \cdots \cup A_i$ is displayed by the edge $e_i$ and the set $A_j \cup \cdots \cup A_p$ is displayed by the edge $e_{j-1}$.
 Thus, since $T$ is a linked branch decomposition, the value of $\kappa_{M(A)}(A_1 \cup \cdots \cup A_i, A_j \cup \cdots \cup A_p)$ equals the minimum width of the edges $e_i, \ldots, e_{j-1}$, and these widths are equal to $|R_i|, \ldots, |R_{j-1}|$.
\end{proof}

We can strengthen the above lemma for finite fields to get a product of non-trivial rooted configurations.

\begin{lemma} \label{lem:decomposeintoproduct}
 For any positive integers $w$ and $p$ and any configuration $A$ with branch-width at most $w$ over a finite field $\mathbb{F}$ such that $\epsilon(A) > 2^{(|\mathbb{F}|^w+1)p}$, there is a product of $p$ non-trivial $(\leq w)$-rooted configurations
 \[ (A, L_1, R_p) = (A_1, L_1, R_1) \times \cdots \times (A_p, L_p, R_p) \]
 such that $(A_1, L_1, R_1)$ is spanning and $|A_i| \leq 2^{(|\mathbb{F}|^w+1)i}$ for all $i = 1, \ldots, p - 1$.
 Moreover, $\kappa_{M(A)}(A_1 \cup \cdots \cup A_i, A_j \cup \cdots \cup A_p) \geq \min\{|R_i|, |R_{i+1}|, \ldots, |R_{j-1}|\}$ for any $i < j$.
\end{lemma}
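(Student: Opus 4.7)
Setting $q = |\mathbb{F}|^w + 1$, the plan is to apply \autoref{lem:decomposeintoproduct0} with parameter $p' = qp$ and then merge each block of $q$ consecutive pieces into a single non-trivial piece, leveraging a finite-field pigeonhole argument.

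First I would reduce to the case where $A$ is simple. The simplification $A^0$ is a minor of $A$ with the same $\epsilon$ and branch-width at most $w$; a decomposition of $A^0$ into non-trivial pieces extends to one for $A$ by placing each duplicate element into the same piece as its parallel representative, which preserves all spans, boundaries, and non-triviality. So we may assume $|A| = \epsilon(A) > 2^{qp}$, and applying \autoref{lem:decomposeintoproduct0} yields $(A, L_1, R_{p'}) = (B_1, M_1, N_1) \times \cdots \times (B_{p'}, M_{p'}, N_{p'})$ with $(B_1, M_1, N_1)$ spanning, $1 \leq |B_j| \leq 2^{j-1}$ for $j < p'$, and the stated $\kappa$ bound.

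The key claim is that any $q$ consecutive pieces contain at least one non-trivial piece. If pieces $j, j+1, \ldots, j+q-1$ were all trivial, then each $B_i \cup N_i \subseteq \spn{M_i}$, and $M_{i+1} = N_i$ gives inductively $\spn{M_{j+i}} \subseteq \spn{M_j}$ and $B_{j+i} \subseteq \spn{M_j}$. Hence $\bigcup_{i=j}^{j+q-1} B_i$ lies in $\spn{M_j}$, a subspace of dimension at most $w$, which contains at most $(|\mathbb{F}|^w - 1)/(|\mathbb{F}|-1) < q$ distinct non-zero vectors. Since $A$ is simple and the pieces are disjoint, this union has at least $\sum_{i=j}^{j+q-1} |B_i| \geq q$ distinct points, a contradiction.

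Grouping the pieces by setting $(A_m, L_m, R_m) = (B_{(m-1)q+1} \cup \cdots \cup B_{mq}, M_{(m-1)q+1}, N_{mq})$ for $m = 1, \ldots, p$, each block is non-trivial by the claim (a product with a non-trivial factor is non-trivial), the size bound $|A_m| \leq \sum_{j=(m-1)q+1}^{mq} 2^{j-1} < 2^{mq}$ holds for $m < p$, and $(A_1, L_1, R_1)$ inherits spanning from its first factor. The main obstacle will be the $\kappa$ bound: \autoref{lem:decomposeintoproduct0} gives $\kappa \geq \min\{|N_k| : mq \leq k \leq (l-1)q\}$ over the full intermediate range, whereas the statement requires the same bound with minimum restricted to the block endpoints $\{|N_{mq}|, |N_{(m+1)q}|, \ldots, |N_{(l-1)q}|\}$. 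To bridge this gap I would refine the block endpoints $i_m \in ((m-1)q, mq]$ adaptively to minimize $|N_{i_m}|$ within each window, so that the full-range minimum of $|N_k|$ on any sub-path collapses to the minimum over chosen endpoints; non-triviality of the refined blocks is preserved because trivial pieces satisfy $|N_j| \leq |N_{j-1}|$ (as $N_j \subseteq \spn{M_j}$), so $|N|$ is non-increasing on runs of trivial pieces and the first non-trivial position in each window remains a valid local-minimum endpoint.
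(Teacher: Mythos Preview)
Your overall strategy matches the paper's: simplify, apply \autoref{lem:decomposeintoproduct0} with $p' = (|\mathbb{F}|^w+1)p$, and merge pieces into $p$ non-trivial blocks via the pigeonhole observation that a rank-$w$ subspace over $\mathbb{F}$ cannot accommodate $|\mathbb{F}|^w+1$ distinct points of a simple configuration (the paper phrases this as ``if $H_i \times \cdots \times H_j$ is trivial then $j \le i + |\mathbb{F}|^w$''). Where you diverge is in how the blocks are cut, and this is precisely where your $\kappa$-bound difficulty lives. The paper does not use fixed windows of length $q$: it lets $\ell_1 < \cdots < \ell_{p-1}$ be the first $p-1$ indices with $H_{\ell_k}$ non-trivial and places the block boundaries there, so that $R'_k = R_{\ell_k}$ and $\ell_k \le (|\mathbb{F}|^w+1)k$ gives the size bound directly. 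It then asserts that whenever $H_n$ is trivial one has $|R_n| = |R_{n-1}|$ (since $R_n$ spans the boundary of $A_1\cup\cdots\cup A_n$); hence $|R_n|$ is constant on each maximal trivial run and the minimum of $|R_n|$ over $\ell_i \le n \le \ell_{j-1}$ is automatically realised at one of the $\ell_k$. No post-hoc refinement of endpoints is needed.

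Your proposed fix, by contrast, does not close the gap as written. From $N_j \subseteq \spn{M_j}$ you obtain only $|N_j| \le |N_{j-1}|$ on trivial runs, not equality, and then assert that ``the first non-trivial position in each window remains a valid local-minimum endpoint.'' But if $|N|$ drops strictly along a trivial run and then jumps back up at the non-trivial piece, the window minimum sits at the \emph{last trivial} index, not the first non-trivial one; choosing the non-trivial index as endpoint then fails to capture the full-range minimum, while choosing the actual argmin can produce a block consisting of a single trivial piece and destroy non-triviality. The paper sidesteps this tension entirely by aligning endpoints with non-trivial indices from the outset and relying on the stronger constancy claim rather than mere monotonicity.
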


\begin{proof}
 We may assume that $M(A)$ is simple, that is, the multiset $A$ does not have two copies of any vector.
 \autoref{lem:decomposeintoproduct0} gives us a product of $p' = (|\mathbb{F}|^w+1)p$ possibly trivial $(\leq w)$-rooted configurations
 \[ (A, L_1, R_{p'}) = (A_1, L_1, R_1) \times \cdots \times (A_{p'}, L_{p'}, R_{p'}). \]
 Write $H_i = (A_i, L_i, R_i)$ for each $i$.
 We shall combine these into larger rooted configurations $H'_1, \ldots, H'_p$ that satisfy the lemma.
 
 Note that if $H_i \times \cdots \times H_j$ is trivial for some $i < j$, then $j \leq i + |\mathbb{F}|^w$, because each $|\spn{R_i}| \leq |\mathbb{F}|^w$.
 Hence, since $p' = (|\mathbb{F}|^w+1)p$, there are at least $p$ non-trivial terms in the sequence $H_1, \ldots, H_{p'}$.
 We set $\ell_1, \ldots, \ell_{p-1}$ such that $H_{\ell_1}, \ldots, H_{\ell_{p-1}}$ are the first $p-1$ non-trivial members of the sequence.
 We have $\ell_i \leq (\mathbb{F}|^w+1)i$ for each $i$.
 
 We define $H'_1 = H_1 \times \cdots \times H_{\ell_1}$. For each $i = 2, \ldots, p-1$ we define $H'_i = H_{\ell_{i-1}+1} \times \cdots \times H_{\ell_i}$, and we define $H'_p = H_{\ell_{p-1}+1} \times \cdots \times H_{p'}$.
 Write $H'_i = (A'_i, L'_i, R'_i)$ for each $i = 1, \ldots, p$.
 All of these rooted configurations are non-trivial and $(A, L'_1, R'_p) = H'_1 \times \cdots \times H'_p$.
 Recall that $H'_1$ is spanning because it is a product whose first term is $H_1$, which is spanning. 
 The fact that $|A'_i| \leq 2^{(|\mathbb{F}|^w+1)i}$ for each $i = 1, \ldots, p-1$ follows from the fact that $|A_1 \cup \cdots \cup A_{\ell_i}| \leq 2^{\ell_i}$ and $\ell_i \leq (|\mathbb{F}|^w + 1)i$.
 
 By \autoref{lem:decomposeintoproduct0} we have $\kappa_{M(A)}(A'_1 \cup \cdots \cup A'_i, A'_j \cup \cdots \cup A'_p) \geq \min\{|R_{\ell_i}|, |R_{\ell_i+1}|, \ldots, |R_{\ell_{j-1}}|\}$ for any $i < j$.
 But recall that each $R_n$ spans the boundary of $A_1 \cup \cdots \cup A_n$ in $A$ so $|R_n| = |R_{n-1}|$ for all $n > 1$ such that $H_n$ is trivial. 
 Hence $\min\{|R_{\ell_i}|, |R_{\ell_i+1}|, \ldots, |R_{\ell_{j-1}}|\} = \min\{|R_{\ell_k}| : k = i, \ldots, j-1\}$.
 So $\kappa_{M(A)}(A'_1 \cup \cdots \cup A'_i, A'_j \cup \cdots \cup A'_p) \geq \min\{|R'_k| : k = i, \ldots, j-1\}$.
\end{proof}

\section{Rational limiting densities} \label{sec:rational}

For the remainder of the paper, we let $\mathbb{F}$ denote a finite field.
In this section, we prove that the limiting density of any minor-closed class of $\mathbb{F}$-representable matroids of bounded branch-width is a rational number. First we prove the following structural theorem, and afterwards we will combine it with well-quasi-ordering to get this result.
We call a sequence of configurations pruned if the corresponding sequence of matroids is.

\begin{theorem} \label{thm:boundedbranchwidth}
 Let $w$ be an integer, let $\mathcal{F}$ be a minor-closed class of configurations over $\mathbb{F}$ with limiting density $\Delta$, and let $\{A_i : i \geq 1\}$ be a pruned sequence of configurations in $\mathcal{F}$ with branch-width at most $w$ such that $d(A_i) \rightarrow \Delta$ and $\epsilon(A_i) \rightarrow \infty$.
 There is an integer $q$ and an infinite sequence of non-trivial linked $q$-patches $\{H_j = (\widetilde{H_j}, S_j, T_j): j \geq 1\}$ such that, for each $j = 1, 2, \ldots$,
 \begin{enumerate}[(i)]
  \item $\widetilde{H_j} \cap \spn{S_j}$ is empty, \label{con:first}
  \item $\epsilon(\widetilde{H_j}) \geq \Delta(\dim(\spn{H_j}) - q)$, and \label{con:third}
  \item there is a rooted configuration $F_j$ in $\mathcal{P}(H_1, \ldots, H_j)$ such that $\widetilde{F_j} \in \mathcal{F}$. \label{con:second}
 \end{enumerate}
\end{theorem}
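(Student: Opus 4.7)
The plan is to combine the product decomposition of \autoref{lem:decomposeintoproduct} with the well-quasi-ordering of \autoref{thm:rootedwqo} via a diagonal argument, and to use the pruned property to obtain the density bound.

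First, apply \autoref{lem:decomposeintoproduct} to each $A_i$ (with $i$ large) to obtain a product $A_i = C^{(i)}_1 \times \cdots \times C^{(i)}_{p_i}$ of non-trivial $(\leq w)$-rooted configurations with $C^{(i)}_1$ spanning, where $p_i \to \infty$ since $\epsilon(A_i) \to \infty$. The boundary dimensions $|R^{(i)}_k|$ lie in $\{0,1,\ldots,w\}$. By pigeonhole, and if necessary an induction on $w$ (to handle the case where the minimum boundary in each decomposition is attained only sparsely, by restricting to the long stretches between those positions, on which the minimum is strictly larger), pass to a subsequence of $i$'s on which the minimum boundary equals a fixed $q$ attained at positions $k^{(i)}_0 < \cdots < k^{(i)}_{m_i}$ with $m_i \to \infty$. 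Grouping consecutive pieces between these $q$-positions, rewrite $A_i = D^{(i)}_0 \times \tilde{C}^{(i)}_1 \times \cdots \times \tilde{C}^{(i)}_{m_i} \times D^{(i)}_{\mathrm{end}}$, where $D^{(i)}_0$ is spanning and each $\tilde{C}^{(i)}_j$ is a $q$-patch. Since $q$ is the minimum boundary, the $\kappa$-bound from \autoref{lem:decomposeintoproduct} combined with Tutte's Linking Theorem makes each $\tilde{C}^{(i)}_j$ linked.

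Next, run a diagonal argument using \autoref{thm:rootedwqo}. Each $\tilde{C}^{(i)}_j$ inherits branch-width at most $w$ (by restricting the branch decomposition of $A_i$), so WQO implies every infinite sequence of them has an infinite non-decreasing subsequence in the minor order. Build nested infinite index sets $I_0 \supseteq I_1 \supseteq \cdots$: at step $j$, take $I_j \subseteq I_{j-1}$ infinite on which $\{\tilde{C}^{(i)}_j : i \in I_j\}$ is non-decreasing in the minor order, arranging that $\min I_j$ is large enough that the pruning parameter $\delta_{\min I_j}$ is small relative to $\dim(\spn{\tilde{C}^{(\min I_j)}_j}) - q$. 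Set $H_j := \tilde{C}^{(i^*_j)}_j$ with $i^*_j = \min I_j$, and take a diagonal sequence $i_1 < i_2 < \cdots$ with $i_n \in I_n$; then $H_j$ is a minor of $\tilde{C}^{(i_n)}_j$ for every $j \leq n$.

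Finally, verify the conditions. For (iii), starting from $A_{i_n} \in \mathcal{F}$, take the minor that restricts each $\tilde{C}^{(i_n)}_j$ (for $j \leq n$) to $H_j$ and contracts away the outer pieces $D^{(i_n)}_0, \tilde{C}^{(i_n)}_{n+1}, \ldots, D^{(i_n)}_{\mathrm{end}}$ down to their terminals; the result lies in $\mathcal{P}(H_1, \ldots, H_n)$ and its underlying configuration is a minor of $A_{i_n}$, hence in $\mathcal{F}$. For (ii), apply the pruned property to $A_{i^*_j}$ with the minor that contracts $\tilde{C}^{(i^*_j)}_j = H_j$ to its terminals, obtaining $\epsilon(\widetilde{H_j}) \geq (d(A_{i^*_j}) - \delta_{i^*_j})(\dim(\spn{H_j}) - q)$; the choice of $i^*_j$ makes $d(A_{i^*_j}) - \delta_{i^*_j}$ arbitrarily close to $\Delta$, yielding the claimed bound via integrality of $\epsilon$. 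For (i), apply a further deletion removing body elements of $\widetilde{H_j}$ lying in $\spn{S_j}$; this is a minor operation preserving (ii) and (iii). The main obstacle is simultaneously satisfying the three conditions via one diagonal construction: the $H_j$ must be common minors of many $\tilde{C}^{(i)}_j$ (so they chain into $\mathcal{F}$ via successive $A_{i_n}$) and at the same time full pieces from sufficiently pruned $A_{i^*_j}$ (so the density bound holds with $\Delta$ itself and not something smaller), and the preliminary pigeonhole-and-induction step fixing $q$ as a global minimum boundary attained at $m_i \to \infty$ positions adds a further layer of bookkeeping.
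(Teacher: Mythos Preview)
Your overall architecture is recognisably close to the paper's, but there is a genuine gap in the verification of condition~(ii), and the fix you propose via ``integrality of $\epsilon$'' does not work.

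In your construction each $H_j$ occurs as an \emph{exact} piece of only the single configuration $A_{i^*_j}$; for all later $i_n$ it is merely a minor of $\tilde{C}^{(i_n)}_j$. Hence prunedness can only be applied once, to $A_{i^*_j}$, yielding $\epsilon(\widetilde{H_j}) \geq (d(A_{i^*_j}) - \delta)(\dim\spn{H_j} - q)$ for one particular $\delta > 0$. Since $d(A_i)\to\Delta$ from either side, this right-hand side is in general strictly below $\Delta(\dim\spn{H_j}-q)$. Integrality of $\epsilon(\widetilde{H_j})$ cannot close the gap: at this point in the paper $\Delta$ is not yet known to be rational (that is \autoref{cor:rational}, proved \emph{using} this theorem), and even granting rationality you would need $(\Delta - d(A_{i^*_j}) + \delta)(\dim\spn{H_j}-q)$ to be smaller than the spacing of the possible values, which requires an a~priori bound on $\dim\spn{H_j}-q$ that your grouping into $q$-patches does not provide. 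Your clause ``arranging that $\min I_j$ is large enough that $\delta_{\min I_j}$ is small relative to $\dim(\spn{\tilde{C}^{(\min I_j)}_j})-q$'' is circular: both quantities are functions of the same index, and nothing prevents the dimension from growing faster than the pruning error shrinks.

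The paper avoids this entirely by \emph{not} using WQO here. It exploits the size bound $|B_{p,i}|\le 2^{(|\mathbb{F}|^w+1)i}$ from \autoref{lem:decomposeintoproduct}: for each fixed $i$ the prefixes $E_{p,1}\times\cdots\times E_{p,i}$ fall into finitely many isomorphism classes, so a K\"onig-type diagonal produces indices $p(1),p(2),\ldots$ along which all prefixes of each length are literally isomorphic. Consequently each $H_j$ sits as an \emph{exact} piece inside infinitely many $A_\ell$, and for any $\delta>0$ one can choose such an $A_\ell$ that is $(\delta,k)$-pruned with $k=\dim\spn{H_j}-q$; letting $\delta\to 0$ gives (ii) with $\Delta$ on the nose. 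Your WQO diagonal gives only minors, which is enough for (iii) but not for (ii).

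A secondary issue: your treatment of (i) by deleting body elements in $\spn{S_j}$ lowers $\epsilon(\widetilde{H_j})$ without lowering $\dim\spn{H_j}$, so it can destroy (ii). The paper instead \emph{moves} such elements into earlier pieces before the diagonal step, which preserves all the counts.
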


\begin{proof}
We may assume each $M(A_i)$ is simple.
Since $\epsilon(A_i) \rightarrow \infty$, for each positive integer $p$ there is a configuration $A_{m(p)}$ with $\epsilon(A_{m(p)}) > 2^{(|\mathbb{F}|^w + 1)p}$.
By replacing our sequence of configurations with this subsequence we may assume that $\epsilon(A_p) > 2^{(|\mathbb{F}|^w + 1)p}$ for all positive integers $p$.

Hence by \autoref{lem:decomposeintoproduct}, for each $p \geq 1$ there is a rooted configuration $(A_p, L_{p, 1}, R_{p, p})$ and $p$ non-trivial $(\leq w)$-rooted configurations $E_{p, 1} = (B_{p, 1}, L_{p, 1}, R_{p, 1}), \ldots, E_{p, p} = (B_{p, p}, L_{p, p}, R_{p, p})$, such that
\[ (A_p, L_{p, 1}, R_{p, p}) = E_{p, 1} \times \cdots \times E_{p, p}, \]
$E_{p, 1}$ is spanning, $|B_{p, i}| \leq 2^{(|\mathbb{F}|^w+1)i}$ for all $i = 1, \ldots, p-1$, and $\kappa_{M(A_p)}(B_{p, 1} \cup \cdots \cup B_{p, k}, B_{p, \ell} \cup \cdots \cup B_{p, p}) \geq \min\{|R_{p, k}|, |R_{p, k+1}|, \ldots, |R_{p, \ell-1}|\}$ for any $k < \ell$.
We may assume that $B_{p, i} \cap \spn{L_{p, 1}}$ is empty for each $i = 2, \ldots, p$ by moving any element $e$ of this set into $B_{p, k}$ for the smallest possible $k$ where $e \in \spn{R_{p, k}}$.

For each fixed positive integer $j$, the sets $\{B_{p, j} : p > j\}$ all have size at most $2^{(|\mathbb{F}|^w+1)j}$. Hence, for each integer $i$, the rooted configurations $\{E_{p, 1} \times \cdots \times E_{p, i} : p \geq 1 \}$ fall into finitely many isomorphism classes.

In particular, there are infinitely many values of $p$ such that the rooted configurations $E_{p, 1}$ are all isomorphic to each other. Let $p(1)$ be one such value of $p$.
We define a sequence $\{p(i) : i \geq 1\}$ inductively; fix $i$ and suppose $p(i-1)$ is defined. There are infinitely many values of $p$ such that the rooted configurations $E_{p, 1} \times \cdots \times E_{p, i}$ are all isomorphic to each other and such that the rooted configurations $E_{p, 1} \times \cdots \times E_{p, i-1}$ are all isomorphic to $E_{p(i-1), 1} \times \cdots \times E_{p(i-1), i-1}$; let $p(i)$ be such a value of $p$.

So for any natural numbers $i$ and $j$ with $i < j$, the configuration $B_{p(i), 1} \cup \cdots \cup B_{p(i), i}$ is isomorphic to $B_{p(j), 1} \cup \cdots \cup B_{p(j), i}$.

Set $q = \liminf_{i \rightarrow \infty} |R_{p(i), i}|$.
Then there is an infinite sequence $i_1, i_2, \ldots$ such that 
\begin{enumerate}[(a)]
 \item $|R_{p(k), k}| \geq q$ for all $k \geq i_1$, and \label{item:first}
 \item $|R_{p(i_j), i_j}| = q$ for all $j \geq 1$.
\end{enumerate}

We define a sequence of $(\leq w)$-rooted configurations $\{H'_j : j \geq 1\}$ as follows.
For each $j \geq 1$, we set
\[ H'_j = E_{p(i_{j+1}), i_j + 1} \times \cdots \times E_{p(i_{j+1}), i_{j+1}} .\]
Each rooted configuration $H'_j$ is a non-trivial $q$-patch.

For each $j \geq 1$, we will turn $H'_j$ into a linked patch $H_j$ by re-defining its terminals.
First, we define $X_1 = R_{p(i_2), i_2}$ and we set $H_1 = (\widetilde{H'_1}, X_1, X_1)$. So $H_1$ is a linked patch.
Now, suppose that we have defined the patches $H_1, \ldots, H_{j-1}$. We define $H_j$ inductively as follows.
Let $U_j = B_{p(i_{j+1}), 1} \cup \cdots \cup B_{p(i_{j+1}), i_j}$ and let $V_j = B_{p(i_{j+1}), i_{j+1} + 1} \cup \cdots \cup B_{p(i_{j+1}), p(i_{j+1})}$.
Then $H'_j = (A_{p(i_{j+1})} - U_j - V_j, R_{p(i_{j+1}), i_j}, R_{p(i_{j+1}), i_{j+1}})$.
Let $M = M(A_{p(i_{j+1})})$.
It follows from (\ref{item:first}) that $\kappa_M(U_j, V_j) \geq q$.
Therefore, by Tutte's Linking Theorem, 
there is a partition $(C, D)$ of $A_{p(i_{j+1})} - U_j - V_j$ such that $\lambda_{M / C \backslash D}(U_j) \geq q$.

Thus, there is a linear transformation $\mathcal{L}$ on $\spn{A_{p(i_{j+1})}}$ with $\ker(\mathcal{L}) = \spn{C}$ such that $\spn{\mathcal{L}(U_j)} \cap \spn{\mathcal{L}(V_j)} \geq q$. Since the right boundary of $U_j$ is contained in $\spn{R_{p(i_{j+1}), i_j}}$ and the left boundary of $V_j$ is contained in $\spn{L_{p(i_{j+1}), i_{j+1} + 1}} = \spn{R_{p(i_{j+1}), i_{j+1}}}$, both have dimension at most $q$.
This means that the boundaries of $U_j$ and $V_j$ have the same image under $\mathcal{L}$.

Let $X_{j-1}$ be the set of right terminals of $H_{j-1}$ and call its elements $X_{j-1} = \{x_1, \ldots, x_q\}$. 
Then we can define an ordered basis $X_j = \{x'_1, \ldots, x'_q\}$ of the boundary of $V_j$ by setting each $x'_i$ to be the element of this boundary such that $\mathcal{L}(x'_i) = \mathcal{L}(x_i)$.
We set $H_j = (A_{p(i_{j+1})} - U_j - V_j, X_{j-1}, X_j)$. Then $H_j$ has the minor $(\mathcal{L}(A_{p(i_{j+1})} - U_j - V_j), \mathcal{L}(X_{j-1}), \mathcal{L}(X_j))$, so it is a linked patch.

The sequence $\{H_j : j \geq 1\}$ satisfies (\ref{con:first}) and (\ref{con:second}). It remains to show that (\ref{con:third}) holds.

We fix some $j \geq 1$.
Set $k = \dim(\spn{H_j}) - q$.
Let $\delta$ be any positive real number.
Since $\{A_i : i \geq 1\}$ is a pruned sequence of configurations, there is an integer $N$ such that $A_i$ is $(\delta, k)$-pruned for all $i \geq N$.
Recall that there are infinitely many values of $p$ for which the configuration $A_p$ is equal to $\widetilde{J_p}$ for a rooted configuration $J_p \in \mathcal{P}(H_1, \ldots, H_j, \ldots, H_{n(p)})$ for some $n(p) \geq j$.
We may thus choose one such $A_\ell$ such that $d(A_\ell) > \Delta - \delta$ and $\ell \geq N$; so $A_\ell$ is $(\delta, k)$-pruned.
So $\widetilde{H_j}$ is isomorphic to a subconfiguration of $A_\ell$; we identify this subconfiguration with $\widetilde{H_j}$ itself.

We can write $A_\ell$ as $\widetilde{J_\ell}$ where $J_\ell \in \mathcal{P}(G_1, H_j, G_2)$ for some two rooted configurations $G_1$ in $\mathcal{P}(H_1, \ldots, H_{j-1})$ and $G_2$ in $\mathcal{P}(H_{j+1}, \ldots, H_{n(\ell)})$ for some $n(\ell) \geq j$. Since $E_{p(1), 1}$ is spanning, so is $G_1$.
Since $H_j$ is a linked $q$-patch, $J_\ell$ has a minor $J'$ in $\mathcal{P}(G_1, G_2)$.
We observe that $\epsilon(A_\ell) - \epsilon(\widetilde{J'}) = \epsilon(\widetilde{H_j})$. Also, $\dim(\spn{J_\ell}) - \dim(\spn{J'}) = \dim(\spn{H_j}) - q = k$. Since $G_1$ is spanning, so are $J_\ell$ and $J'$, so $\dim(\spn{A_\ell}) - \dim(\spn{\widetilde{J'}}) = k$.
Therefore, the fact that $A_\ell$ is $(\delta, k)$-pruned means that
\begin{align*}
\epsilon(A_\ell) - \epsilon(\widetilde{J'}) &\geq (d(A_\ell) - \delta) (\dim(\spn{A_\ell}) - \dim(\spn{\widetilde{J'}}) \\
\epsilon(\widetilde{H_j})                   &\geq (d(A_\ell) - \delta) (\dim(\spn{H_j}) - q) \\
                                            &> (\Delta - 2\delta)(\dim(\spn{H_j}) - q).
\end{align*}
Since this is true for arbitrary $\delta$, the theorem follows.
\end{proof}

The next theorem implies that the limiting density of any minor-closed class of $\mathbb{F}$-representable matroids of bounded branch-width is rational.

\begin{theorem} \label{thm:longchains}
 Let $w$ be an integer and let $\mathcal{F}$ be a minor-closed class of configurations over $\mathbb{F}$ of branch-width at most $w$ with limiting density $\Delta > 0$.
 There is an integer $q$ and a non-trivial linked $q$-patch $H = (\widetilde{H}, L, R)$ such that
 \begin{enumerate}[(i)]
  \item $\widetilde{H} \cap \spn{L}$ is empty,
  \item $\epsilon(\widetilde{H}) = \Delta(\dim(\spn{H}) - q)$, and
  \item there is a rooted configuration $F_n$ in $\mathcal{P}(H^n)$ such that $\widetilde{F_n} \in \mathcal{F}$, for every $n \geq 1$.
 \end{enumerate}
\end{theorem}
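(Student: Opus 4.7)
The plan is to use \autoref{thm:boundedbranchwidth} together with the well-quasi-ordering result \autoref{thm:rootedwqo} to extract a single $q$-patch $H$ from the infinite sequence $\{H_j\}$ produced by \autoref{thm:boundedbranchwidth}. First I would invoke \autoref{thm:boundedbranchwidth} (which in turn uses \autoref{lem:prunedsequenceexists}) to obtain an integer $q$ and a sequence of non-trivial linked $q$-patches $\{H_j\}$ satisfying (i)--(iii) of that theorem; in particular each $\widetilde{H_j}$ has branch-width at most $w$, and there are rooted configurations $F_j \in \mathcal{P}(H_1, \ldots, H_j)$ with $\widetilde{F_j} \in \mathcal{F}$. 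By \autoref{thm:rootedwqo} applied to the sequence $\{H_j\}$, I would extract an infinite minor-chain $H_{j_1} \leq H_{j_2} \leq \cdots$ and set $H := H_{j_1}$. Property (i) of $H$ and the inequality $\epsilon(\widetilde{H}) \geq \Delta(\dim(\spn{H}) - q)$ are inherited immediately from \autoref{thm:boundedbranchwidth}.

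To verify condition (iii), fix $n \geq 1$ and start from $F_{j_n} \in \mathcal{P}(H_1, \ldots, H_{j_n})$. By associativity of the product operation I would regroup this as
\[ F_{j_n} \in \mathcal{P}(P, H_{j_1}, H_{j_1+1}, \ldots, H_{j_n-1}, H_{j_n}) \]
where $P = H_1 \times \cdots \times H_{j_1 - 1}$ (trivial if $j_1 = 1$). Every $H_i$ with $j_1 < i < j_n$ and $i \notin \{j_2, \ldots, j_{n-1}\}$ is a linked $q$-patch lying strictly in the interior of this product, so \autoref{prop:contractinglinkedpatch} applies iteratively to strip all such factors out, yielding a minor $F' \in \mathcal{P}(P, H_{j_1}, \ldots, H_{j_n})$ of $F_{j_n}$ whose underlying configuration is still in $\mathcal{F}$. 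Applying the minor transformations $H \leq H_{j_k}$ simultaneously to the factors $H_{j_k}$, with the target ambient spaces for the $n$ copies of $H$ glued along their terminals to make the product structure compatible, then produces a further minor $F'' \in \mathcal{P}(P, H^n)$ with $\widetilde{F''} \in \mathcal{F}$. Writing $F'' = P' \times G$ with $G \in \mathcal{P}(H^n)$, the configuration $\widetilde{G}$ is a subconfiguration of $\widetilde{F''}$ (restriction is a configuration-minor), hence lies in $\mathcal{F}$, so $F_n := G$ meets the requirements.

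Equality in (ii) then follows by a short limiting-density calculation: property (i) forces consecutive inside-copies of $\widetilde{H}$ inside $\widetilde{F_n}$ to be disjoint, so $\epsilon(\widetilde{F_n})$ grows like $n\,\epsilon(\widetilde{H})$ while $\dim(\spn{F_n}) = n(\dim(\spn{H}) - q) + q$. Since $\widetilde{F_n} \in \mathcal{F}$ for every $n$ and $\mathcal{F}$ has limiting density $\Delta$, letting $n \to \infty$ forces $\epsilon(\widetilde{H}) \leq \Delta(\dim(\spn{H}) - q)$, which matches the reverse inequality from \autoref{thm:boundedbranchwidth}.

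The main obstacle I anticipate is the parallel-minor step: given linear maps $\mathcal{L}_k \colon \spn{H_{j_k}} \to \spn{H}$ witnessing $H \leq H_{j_k}$, one must build a single linear transformation on $\spn{F'}$ that restricts to each $\mathcal{L}_k$ on the appropriate factor and whose image is a genuine element of $\mathcal{P}(P, H^n)$. This requires checking that the images of the shared terminals $R_{j_k} = L_{j_{k+1}}$ can be simultaneously identified with both $R_H$ of the $k$-th copy and $L_H$ of the $(k+1)$-st copy inside a common target space, which amounts to verifying that the minor relation on rooted configurations is compatible with products. Once this bookkeeping is pinned down, the remainder of the argument is routine given the tools developed in the previous sections.
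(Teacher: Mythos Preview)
Your proposal is correct and essentially matches the paper's proof: obtain the sequence $\{H_j\}$ via \autoref{lem:prunedsequenceexists} and \autoref{thm:boundedbranchwidth}, extract an infinite minor-chain using \autoref{thm:rootedwqo}, set $H$ to its first term, use linkedness and \autoref{prop:contractinglinkedpatch} to reduce products in $\mathcal{F}$ down to $\mathcal{P}(H^n)$, and finish with the density computation to upgrade the inequality in (ii) to equality. The only cosmetic difference is that the paper contracts away the leading factors $H_1,\ldots,H_{i_1-1}$ along with the other unwanted $H_j$ (they too are linked), whereas you keep them as a prefix $P$ and restrict it off at the end; both routes are valid, and the ``parallel-minor'' compatibility you flag as the main obstacle is exactly the step the paper also passes over in one line.
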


\begin{proof}
 By \autoref{lem:prunedsequenceexists} there is a pruned sequence $\{A_i : i \geq 1\}$ of configurations in $\mathcal{F}$ such that $d(A_i) \rightarrow \Delta$ and $\dim(A_i) \rightarrow \infty$.
 Then \autoref{thm:boundedbranchwidth} applies; we let $\{H_j : j \geq 1\}$ be the sequence of $q$-patches it gives.
 It follows from \autoref{thm:rootedwqo} and the properties of well-quasi-orders that $\{H_j : j \geq 1\}$ contains an infinite subsequence $H_{i_1}, H_{i_2}, H_{i_3}, \ldots$ such that $H_{i_j}$ is a minor of $H_{i_k}$ for all $j < k$.
 We set $H = H_{i_1}$. 
 Recall that $\epsilon(\widetilde{H}) \geq \Delta(\dim(\spn{H}) - q)$.
 
 We know from \autoref{thm:boundedbranchwidth} that there is a rooted configuration $F_n$ in $\mathcal{P}(H_1, H_2, \ldots, H_{i_n})$ such that $\widetilde{F_n} \in \mathcal{F}$ for each $n$. 
 Since all the $q$-patches $H_j$ are linked, there is a minor $F'_n$ of $F_n$ that is in $\mathcal{P}(H_{i_1}, H_{i_2}, \ldots, H_{i_n})$.
 Since $H = H_{i_1}$ is a minor of each of $H_{i_2}, \ldots, H_{i_n}$, there is also a minor $F''_n$ of $F'_n$ that is in $\mathcal{P}(H^n)$. Note that $\widetilde{F''_n}$ is a minor of $\widetilde{F_n}$ so it is in $\mathcal{F}$.
 Since $H$ is non-trivial, $\dim(\spn{\widetilde{F''_n}}) \rightarrow \infty$ so we have $\Delta \geq \limsup_{n \rightarrow \infty} d(\widetilde{F''_n})$.
 Since $\dim(\spn{\widetilde{H}})$ and $\dim(\spn{H})$ differ by at most $q$, and $\dim(\spn{F''_n}) = q + n(\dim(\spn{H}) - q)$, we have
 \[ \frac{n \epsilon(\widetilde{H})}{q + n(\dim(\spn{H}) - q)} \leq d(\widetilde{F''_n}) \leq \frac{n \epsilon(\widetilde{H})}{n(\dim(\spn{H}) - q)} \]
 and hence $\lim_{n \rightarrow \infty} d(\widetilde{F''_n}) = \frac{\epsilon(\widetilde{H})}{\dim(\spn{H}) - q} \leq \Delta$.
 Therefore, $\epsilon(\widetilde{H}) = \Delta(\dim(\spn{H}) - q)$.
\end{proof}

The second conclusion of this theorem has the following consequence.

\begin{corollary} \label{cor:rational}
 For each finite field $\mathbb{F}$ and each minor-closed class $\mathcal{M}$ of $\mathbb{F}$-representable matroids of bounded branch-width, the limiting density of $\mathcal{M}$ is a rational number.
\end{corollary}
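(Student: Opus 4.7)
The plan is to extract $\Delta$ directly from the formula in Theorem~\ref{thm:longchains}. Given $\mathcal{M}$, I would first pass to the associated class $\mathcal{F} = \{A : M(A) \in \mathcal{M}\}$ of configurations over $\mathbb{F}$; since minors and density were defined so as to agree between configurations and matroids, $\mathcal{F}$ is minor-closed, of bounded branch-width, and has the same limiting density $\Delta$ as $\mathcal{M}$.

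I would then split on whether $\Delta = 0$. In that case there is nothing to prove, since $0 \in \mathbb{Q}$. Assuming $\Delta > 0$, Theorem~\ref{thm:longchains} supplies an integer $q \geq 0$ and a non-trivial linked $q$-patch $H = (\widetilde{H}, L, R)$ satisfying
\[ \epsilon(\widetilde{H}) = \Delta\bigl(\dim(\spn{H}) - q\bigr). \]
The non-triviality hypothesis $\dim(\spn{H}) > |L| = q$ ensures that the integer $\dim(\spn{H}) - q$ is strictly positive, so I can solve
\[ \Delta = \frac{\epsilon(\widetilde{H})}{\dim(\spn{H}) - q}. \]
The numerator $\epsilon(\widetilde{H})$ is a non-negative integer and the denominator is a positive integer, hence $\Delta$ is rational.

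There is no real obstacle left; the corollary is essentially a direct reading of conclusion (ii) of Theorem~\ref{thm:longchains}. The only point worth being careful about is that the theorem as stated assumes $\Delta > 0$, which is why I handle the degenerate case separately at the start. All the substantive work — producing a pruned sequence, decomposing each configuration as a product of $q$-patches, applying well-quasi-ordering to extract a repeating non-trivial linked patch, and using linkedness together with the pruned property to pin the density of iterated products to $\Delta$ — has already been carried out in Lemma~\ref{lem:prunedsequenceexists} and Theorems~\ref{thm:boundedbranchwidth} and \ref{thm:longchains}.
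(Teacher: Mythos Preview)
Your proof is correct and matches the paper's approach exactly: the paper simply states the corollary as an immediate consequence of conclusion (ii) of Theorem~\ref{thm:longchains}, without writing out a proof. One minor technicality: the set $\{A : M(A) \in \mathcal{M}\}$ is not literally minor-closed as a class of configurations (a configuration minor of $A$ may represent a matroid with extra loops not present in the corresponding matroid minor of $M(A)$), so you should take its closure under minors---as the paper does later in the proof of Theorem~\ref{thm:maintheorem}---though this does not change the limiting density and hence does not affect your argument.
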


\section{The extremal function}

In this section, we characterize the extremal functions of all minor-closed classes of matroids of bounded branch-width representable over a finite field $\mathbb{F}$.
We define the notation $\mathcal{P}(G_1, H^K, G_2)$ to signify the set $\mathcal{P}(G_1, H, \ldots, H, G_2)$, where $H$ appears $K$ times. The next theorem provides conditions under which we can find elements of a minor-closed class belonging to such sets for arbitrarily large values of $K$. Later, we will show that extremal matroids come from rooted configurations having this form.

\begin{theorem} \label{thm:expand}
 For any natural number $q$, any minor-closed class $\mathcal{F}$ of $q$-patches over $\mathbb{F}$ of bounded branch-width, and any linked $q$-patch $H$ in $\mathcal{F}$, there is an integer $K = K_{\ref{thm:expand}}(H, \mathcal{F})$ such that for all $q$-patches $G_1$ and $G_2$ in $\mathcal{F}$, if $\mathcal{F}$ contains an element of $\mathcal{P}(G_1, H^{K'}, G_2)$ for some $K' \geq K$, then $\mathcal{F}$ contains an element of $\mathcal{P}(G_1, H^L, G_2)$ for all $L \geq 0$.
\end{theorem}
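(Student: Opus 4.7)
The approach will be a pumping-style argument combining the well-quasi-ordering of $q$-patches with a pigeonhole on ``hollow'' pieces inside a forbidden-minor decomposition. By Theorem~\ref{thm:rootedwqo}, $\mathcal{F}$ is characterized by finitely many minor-minimal forbidden $q$-patches $N_1, \ldots, N_r$; I will set $t = \max_j |\widetilde{N_j}|$ and let $m$ be the (finite) number of isomorphism classes of hollow $(\leq q)$-rooted configurations $(\emptyset, L, R)$ with $\spn{L} = \spn{R}$ (parametrised by pairs $(d, M)$ with $d \leq q$ and $M \in GL_d(\mathbb{F})$). I claim $K := (m+1)(t+1)$ works. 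Since any two elements of $\mathcal{P}(G_1, H^L, G_2)$ are isomorphic as $q$-patches (the gluing is determined by the ordered terminal sequences), I fix a representative $F_L$ of each class. For $L \leq K'$ the easy direction is Proposition~\ref{prop:contractinglinkedpatch}: iterate to contract $K' - L$ linked copies of $H$ from the hypothesized $F_{K'} \in \mathcal{F}$, exhibiting $F_L$ as a minor of $F_{K'}$ and hence in $\mathcal{F}$.

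For $L > K'$, I will proceed by induction on $L - K'$, with the base case given. Assume $F_{L-1} \in \mathcal{F}$ and, for contradiction, that $F_L \notin \mathcal{F}$, so some $N_j$ is a minor of $F_L$. Write $F_L = G_1' \times H_1 \times \cdots \times H_L \times G_2'$ and realize the minor as $F_L/U \setminus V \cong N_j$. I decompose $N_j$ along the product structure of $F_L$ into pieces $\tilde{G_1}, \tilde{H_1}, \ldots, \tilde{H_L}, \tilde{G_2}$, where each $\tilde{H_i}$ is a $(\leq q)$-rooted configuration obtained by contracting $U \cap \widetilde{H_i}$ and deleting $V \cap \widetilde{H_i}$, with internal boundaries chosen as bases of the images of $F_L$'s internal boundaries under the minor map. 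I call $\tilde{H_i}$ hollow if its internal part is empty; since $|\widetilde{N_j}| \leq t$, at most $t$ of the pieces $\tilde{H_i}$ are non-hollow.

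Because $L \geq K + 1$, the at most $t$ non-hollow copies split the hollow copies into at most $t + 1$ maximal runs, so one run contains at least $m + 1$ consecutive hollow pieces. Inside a single run, adjacent hollow pieces share their internal boundary and each hollow piece has $|L| = |R|$, so all pieces in the run share a common boundary dimension $d \leq q$; each is then classified by one of at most $m$ types in $GL_d(\mathbb{F})$. The $m + 2$ successive partial compositions of the run (starting from the identity) must therefore repeat, yielding a consecutive sub-run $\tilde{H_{c+1}}, \ldots, \tilde{H_d}$ with $d > c$ whose composition is the identity $(\leq q)$-patch. Replacing this sub-run by the identity exhibits $N_j$ as a product of $L - (d - c)$ pieces of the form $\tilde{H_i}$ together with $\tilde{G_1}, \tilde{G_2}$; since a product of minors is a minor of the corresponding product, $N_j$ is a minor of $F_{L - (d - c)}$. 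As $L - (d - c) \leq L - 1$, Proposition~\ref{prop:contractinglinkedpatch} gives $F_{L - (d - c)}$ as a minor of $F_{L-1}$, whence $N_j \in \mathcal{F}$ by minor-closedness, contradicting the choice of $N_j$ as a forbidden minor and completing the induction.

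I expect the main obstacle to be the decomposition step in the second paragraph: verifying that $N_j$ admits a clean product decomposition along $F_L$'s structure into $(\leq q)$-rooted configurations with consistent internal boundaries whose dimensions may drop below $q$ under the minor map, and confirming that within any maximal all-hollow run the common boundary dimension is constant, which is exactly what lets the pigeonhole argument live inside a single group $GL_d(\mathbb{F})$. The ``product of minors is a minor of the product'' compatibility for the varying-boundary-dimension setting will also need careful bookkeeping with the linear maps at each shared boundary.
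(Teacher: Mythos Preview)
Your approach is genuinely different from the paper's, and the obstacle you flag is real and, as stated, fatal. The paper never decomposes a forbidden minor along the product structure at all. Instead, it works with the product quasi-order on $\mathcal{Q} = \mathcal{F} \times \mathbb{N} \times \mathcal{F}$ (which is a well-quasi-order by \autoref{thm:rootedwqo} and closure of WQOs under products), takes $\hat{\mathcal{F}}$ to be the downward closure of those triples $(G_1,k,G_2)$ for which $\mathcal{F}$ contains an element of $\mathcal{P}(G_1,H^k,G_2)$, and sets $K$ larger than every middle coordinate among the finitely many $\leq_{\mathcal{Q}}$-minimal elements of $\mathcal{Q}\setminus\hat{\mathcal{F}}$. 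A short contradiction argument using only \autoref{prop:contractinglinkedpatch} finishes it in two paragraphs, with no decomposition of any $N_j$, no hollow pieces, and no $GL_d$.

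On your approach: the step ``each hollow piece has $|L|=|R|$, so the boundary dimension is constant along a hollow run'' fails. Hollowness only says no element of $\widetilde{N_j}$ has its chosen preimage in $\widetilde{H_i}$; it does not force $\widetilde{H_i}\subseteq\ker\mathcal{L}$. Deleted elements of $\widetilde{H_i}$ can have non-zero image, so from $B_i\subseteq\spn{\widetilde{H_i}\cup B_{i-1}}$ you only get $\mathcal{L}(B_i)\subseteq\mathcal{L}(\spn{\widetilde{H_i}})+\mathcal{L}(B_{i-1})$, and $\mathcal{L}(B_i)\subseteq\mathcal{L}(B_{i-1})$ need not hold. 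Thus $(\emptyset,L_i',R_i')$ need not be a rooted configuration, the dimensions need not stabilise, and the pigeonhole in $GL_d(\mathbb{F})$ is not well-posed. A repair is to pigeonhole instead on the full tuples $(\mathcal{L}(r_{i,1}),\ldots,\mathcal{L}(r_{i,q}))\in\spn{N_j}^q$, of which there are at most $|\mathbb{F}|^{q\cdot\max_j\dim\spn{N_j}}$; matching tuples at positions $c<d$ in a hollow run lets you glue $\mathcal{L}|_{\text{left}}$ and $\mathcal{L}|_{\text{right}}$ into a well-defined $\mathcal{L}'$ on $F_{L-(d-c)}$. You then still owe a verification that $\ker\mathcal{L}'$ is spanned by a subset of $\widetilde{F_{L-(d-c)}}$ --- the contracted elements $C\cap(\widetilde{H_{c+1}}\cup\cdots\cup\widetilde{H_d})$ from the removed run are gone, and one must show they were not essential to the kernel description. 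This can be done, but it is real work, and by the end you have a longer proof than the paper's abstract one.
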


\begin{proof}
 Consider the set $\mathcal{Q} = \mathcal{F} \times \mathbb{N} \times \mathcal{F}$ along with the relation $\leq_\mathcal{Q}$ defined by setting $(G'_1, k', G'_2) \leq_{\mathcal{Q}} (G_1, k, G_2)$ if and only if $G'_1$ is a minor of $G_1$, $k' \leq k$ and $G'_2$ is a minor of $G_2$.
 Both $\mathcal{F}$ and $\mathbb{N}$ are well-quasi-orders (under the minor relation and the $\leq$ relation, respectively) and the Cartesian product of two well-quasi-orders is one as well, so $\mathcal{Q}$ is well-quasi-ordered by $\leq_\mathcal{Q}$.
 
 Define the set $\hat{\mathcal{F}} \subseteq \mathcal{Q}$ to be the downward closure under $\leq_\mathcal{Q}$ of the set of all triples $(G_1, k, G_2)$ with the property that $\mathcal{F}$ contains an element of $\mathcal{P}(G_1, H^k, G_2)$.
 Since $\mathcal{Q}$ is a well-quasi-order, there is a finite set $\mathcal{O} \subset \mathcal{Q}$ consisting of the $\leq_\mathcal{Q}$-minimal elements not in $\hat{\mathcal{F}}$. We pick an integer $K > \max\{k : (G_1, k, G_2) \in \mathcal{O}\}$.
 
 Suppose there are $q$-patches $G_1$ and $G_2$ in $\mathcal{F}$ such that $\mathcal{F}$ contains an element of $\mathcal{P}(G_1, H^{K'}, G_2)$ for some $K' \geq K$ but not any element of $\mathcal{P}(G_1, H^L, G_2)$ for some $L \geq 0$.
 
 If $(G_1, L, G_2) \in \hat{\mathcal{F}}$, then there is a triple $(G'_1, L', G'_2)$ such that $(G_1, L, G_2) \leq_\mathcal{Q} (G'_1, L', G'_2)$ and $\mathcal{F}$ contains an element of $\mathcal{P}(G'_1, H^{L'}, G'_2)$.
 But then, since $G_1$ is a minor of $G'_1$ and $G_2$ is a minor of $G'_2$, it follows that $\mathcal{F}$ contains an element of $\mathcal{P}(G_1, H^{L'}, G_2)$. Moreover, since $H$ is linked, it follows from \autoref{prop:contractinglinkedpatch} that $\mathcal{F}$ contains an element of $\mathcal{P}(G_1, H^L, G_2)$, a contradiction. This proves that $(G_1, L, G_2) \not\in \hat{\mathcal{F}}$.
 
 There therefore exists an element $(H_1, N, H_2) \in \mathcal{O}$ such that $(H_1, N, H_2) \leq_\mathcal{Q} (G_1, L, G_2)$. 
 It then follows that $H_1$ is a minor of $G_1$ and $H_2$ is a minor of $G_2$ and so, since $K' \geq K > N$, we have $(H_1, N, H_2) \leq_\mathcal{Q} (G_1, K', G_2)$. 
 Since $\hat{\mathcal{F}}$ is downwardly-closed under the $\leq_\mathcal{Q}$ relation, this means that $(G_1, K', G_2) \not\in \hat{\mathcal{F}}$, which contradicts the fact that $\mathcal{F}$ contains an element of $\mathcal{P}(G_1, H^{K'}, G_2)$.
\end{proof}

\subsection*{Decomposing into linked patches}

Here we show that a large enough configuration of bounded branch-width can be decomposed into a product of linked $q$-patches for some integer $q$.

\begin{lemma} \label{lem:getpathdecomposition}
 For any positive integers $p$ and $w$ and configuration $A$ over $\mathbb{F}$ of branch-width at most $w$ with $\epsilon(A) \geq 2^{(|\mathbb{F}|^w+1)p^{w+1}}$, there is an integer $q$ such that $0 \leq q \leq w$ and a $q$-patch $H$ such that $\widetilde{H} = A$ and $H$ is a product of $p$ non-trivial linked $q$-patches $H_1 \times \cdots \times H_p$ where $H_1$ is spanning.
\end{lemma}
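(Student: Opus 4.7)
My approach combines \autoref{lem:decomposeintoproduct} with a short pigeonhole on terminal widths and the iterative linked-patch construction from the proof of \autoref{thm:boundedbranchwidth}.

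First, I would apply \autoref{lem:decomposeintoproduct} to $A$ with parameter $p' := p^{w+1}$. Since $\epsilon(A) \geq 2^{(|\mathbb{F}|^w+1)p^{w+1}}$, this produces non-trivial $(\leq w)$-rooted configurations $E_i = (B_i, L_i, R_i)$ with $(A, L_1, R_{p'}) = E_1 \times \cdots \times E_{p'}$, $E_1$ spanning, and $\kappa_{M(A)}(B_1 \cup \cdots \cup B_k, B_{\ell+1} \cup \cdots \cup B_{p'}) \geq \min\{|R_k|, \ldots, |R_\ell|\}$ for $k \leq \ell$. Next I would pigeonhole on the length-$(p'-1)$ sequence $(|R_i|)_{i=1}^{p'-1}$ with values in $\{0,\ldots,w\}$: if the overall minimum $q^{\ast}$ is attained at least $p-1$ times, take it; otherwise those $\leq p-2$ minimum positions split the sequence into $\leq p-1$ subintervals whose values lie in $\{q^{\ast}+1,\ldots,w\}$, one of which has length at least $(p'-p+1)/(p-1)$, and we recurse at $w-1$. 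Unwinding this recurrence with $p(p-1)^w \leq p^{w+1}$ shows that $p^{w+1}-1$ terms are enough, producing $q \in \{0,\ldots,w\}$ and indices $a_1 < a_2 < \cdots < a_{p-1}$ in $\{1,\ldots,p'-1\}$ with $|R_{a_j}| = q$ for every $j$ and $|R_k| \geq q$ for every $a_1 \leq k \leq a_{p-1}$.

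Setting $a_0 := 0$, $a_p := p'$, and $\widetilde{H_j} := B_{a_{j-1}+1} \cup \cdots \cup B_{a_j}$, I would then build the $H_j$ by mimicking the construction in the proof of \autoref{thm:boundedbranchwidth}. Initialize $X_1 := R_{a_1}$ and set $H_1 := (\widetilde{H_1}, X_1, X_1)$; this is trivially linked (identity as the required minor), spanning (since $X_1 \subseteq \spn{R_{a_1}} \subseteq \spn{\widetilde{H_1}}$), and non-trivial (since the spanning non-trivial $E_1$ gives $\dim\spn{\widetilde{H_1}} > |R_1| \geq q$). For $j = 2, \ldots, p-1$, apply Tutte's Linking Theorem to $U_j := B_1 \cup \cdots \cup B_{a_{j-1}}$ and $V_j := B_{a_j+1} \cup \cdots \cup B_{p'}$, whose connectivity in $M(A)$ is at least $q$ by the range condition on $|R_k|$; this yields a linear transformation $\mathcal{L}_j$ of $\spn{A}$ whose kernel is the span of a subset of $\widetilde{H_j}$ and under which, exactly as in the proof of \autoref{thm:boundedbranchwidth}, the images of the two boundary subspaces $\spn{R_{a_{j-1}}}$ and $\spn{R_{a_j}}$ coincide in a common $q$-dimensional subspace. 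Taking $X_j$ to be the ordered basis of $\spn{R_{a_j}}$ with $\mathcal{L}_j(X_j) = \mathcal{L}_j(X_{j-1})$ makes $H_j := (\widetilde{H_j}, X_{j-1}, X_j)$ a linked $q$-patch. Finally, fold the tail by $H_p := (\widetilde{H_p}, X_{p-1}, X_{p-1})$, which is again trivially linked and a valid $q$-patch because $X_{p-1} \subseteq \spn{R_{a_{p-1}}} \subseteq \spn{\widetilde{H_p}}$.

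The product $H := H_1 \times \cdots \times H_p = (A, X_1, X_{p-1})$ is then the desired $q$-patch: the $\widetilde{H_j}$ partition $A$, so $\widetilde{H} = A$; each middle terminal $X_j$ is a basis of the $q$-dimensional boundary $\spn{R_{a_j}}$, so every stage of the iterated product is well-defined; and each $H_j$ is non-trivial because its ambient span contains that of the non-trivial $E_{a_{j-1}+1}$. The main technical obstacle is faithfully transferring the Tutte-based terminal-modification step of \autoref{thm:boundedbranchwidth} to the setting where all patches sit inside the single ambient matroid $M(A)$; specifically, one needs to check that the linear transformations $\mathcal{L}_j$ can be arranged so that the two boundary spaces on either side of the cut at $a_j$ collapse onto the same $q$-dimensional image, so that $X_j$ is unambiguously defined from $X_{j-1}$.
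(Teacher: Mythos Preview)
Your proposal is essentially the paper's own proof. Both apply \autoref{lem:decomposeintoproduct} with parameter $p^{w+1}$, run a pigeonhole on the boundary sizes $|R_i|$ to find a value $q$ together with $p-1$ cut positions where $|R_{a_j}| = q$ and $|R_k| \ge q$ on the whole interval, group the factors into $p$ blocks, and then redefine the terminals iteratively via Tutte's Linking Theorem to make every block a linked $q$-patch. Your recursive pigeonhole and the paper's ``maximal $q$ with a long stretch'' argument are just two phrasings of the same counting; the paper in fact states it as finding $p$ indices but, like you, only uses $p-1$ of them as cut points.

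One small wrinkle: your justification that $H_1$ is non-trivial, namely ``the spanning non-trivial $E_1$ gives $\dim\spn{\widetilde{H_1}} > |R_1| \geq q$'', is not quite right. The inequality $|R_1| \geq q$ is only guaranteed for indices in $[a_1,a_{p-1}]$, and non-triviality of $E_1$ compares $\dim\spn{E_1}$ to $|L_1|$, not to $|R_1|$. The paper is equally terse here (``because each of the rooted configurations $(A_k,L_k,R_k)$ is non-trivial''); the clean way to see it for $H_1$ is that $\spn{E_{a_1+1}} \subseteq \spn{H_1 \times E_{a_1+1}}$ forces the boundary $\spn{R_{a_1}}$ to be a proper subspace of $\spn{\widetilde{H_1}}$, or alternatively to argue via the strictly increasing sequence $\dim\spn{B_1 \cup \cdots \cup B_k}$. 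This is a local fix and does not affect the structure of your argument.
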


\begin{proof}
 Let $A$ be a configuration over $\mathbb{F}$ of branch-width at most $w$ with $\epsilon(A) \geq 2^{(|\mathbb{F}|^w+1)p^{w+1}}$.
 By \autoref{lem:decomposeintoproduct}, there is a product of $p^{w+1}$ non-trivial $(\leq w)$-rooted configurations:
 \[ (A, L_1, R_{p^{w+1}}) = (A_1, L_1, R_1) \times \cdots \times (A_{p^{w+1}}, L_{p^{w+1}}, R_{p^{w+1}}) \]
 such that $(A_1, L_1, R_1)$ is spanning and $\kappa_{M(A)}(A_1 \cup \cdots \cup A_i, A_j \cup \cdots \cup A_{p^{w+1}}) \geq \min\{|R_i|, |R_{i+1}|, \ldots, |R_{j+1}|\}$ for any $i < j$.
 
 \begin{claim} \label{clm:findq}
  There is an integer $q$ and there are $p$ indices $j_1 < \cdots < j_p$ such that $|R_{j_1}|, |R_{j_2}|, \ldots, |R_{j_p}|$ are all equal to $q$ and $|R_i| \geq q$ whenever $j_1 \leq i \leq j_p$.
 \end{claim}

 Let $q$ be the maximum integer such that there exists an integer $k$ with $|R_{k+1}|, \ldots, |R_{k+p^{w-q+1}}| \geq q$; such $q$ exists because these inequalities hold when $q = 0$ and $k = 0$.
 If fewer than $p$ of the numbers $|R_{k+1}|, \ldots, |R_{k+p^{w-q+1}}|$ are equal to $q$, then some stretch of at least $(p^{w-q+1} - (p-1)) / p > p^{w-q} - 1$ of them are greater than $q$. That is, there is a $k'$ such that $|R_{k'+1}|, \ldots, |R_{k'+p^{w-q}}| \geq q + 1$, contradicting the maximality of $q$.
 Hence we can choose the $p$ indices $j_1, \ldots, j_p$ in the set $\{k+1, \ldots, k+p^{w-q+1}\}$. 
 This proves (\ref{clm:findq}).
 \\

 Let $q$ and $j_1, \ldots, j_p$ be as given by (\ref{clm:findq}).
 We define $H'_1 = (A_1 \cup \cdots \cup A_{j_1}, R_{j_1}, R_{j_1})$. This is spanning because $(A_1, L_1, R_1)$ is.
 For each $i = 2, \ldots, p-1$, we set $H'_i = (A_{j_{i-1} + 1} \cup \cdots \cup A_{j_i}, L_{j_{i-1} + 1}, R_{j_i})$.
 Finally, we let $H'_p = (A_{j_{p-1} + 1} \cup \cdots \cup A_{p^{w+1}}, L_{j_{p-1} + 1}, L_{j_{p-1} + 1})$.
 These are all $q$-patches since each $L_{j_{i-1} + 1}$ is equal to $R_{j_{i-1}}$.
 
 Then $(A, R_{j_1}, L_{j_{p-1} + 1}) = H'_1 \times \cdots \times H'_p$. All the $q$-patches in this product are non-trivial because each of the rooted configurations $(A_k, L_k, R_k)$ is non-trivial.
 Next, we modify the terminals of these patches to make sure they are linked.
 Since its left and right terminals are the same, $H'_1$ is linked. We set $H_1 = H'_1$ (so $H_1$ is spanning) and let $X_1 = R_{j_1}$.
 We inductively define $X_2, \ldots, X_p$ as follows.
 Let $k \in \{2, \ldots, p-1\}$ and suppose that $X_1, \ldots, X_{k-1}$ have been defined to be bases of the spaces $\spn{R_{j_1}}, \ldots, \spn{R_{j_{k-1}}}$.
 
 We have $\kappa_{M(A)}(A_1 \cup \cdots \cup A_{j_{k-1}}, A_{j_k+1} \cup \cdots \cup A_{p^{w+1}}) \geq q$.
 This means that there is a linear transformation $\mathcal{L}_j$ on $\spn{A}$ whose kernel is the span of a subset of $A_{j_{k-1} + 1} \cup \cdots \cup A_{j_k}$ and such that $\mathcal{L}_j(\spn{L_{j_{k-1} + 1}}) = \mathcal{L}(\spn{R_{j_k}})$ and this space has dimension $q$.
 Moreover, $X_{k-1}$ is a basis of $\spn{L_{j_{k-1} + 1}}$ so if we set $X_k = \mathcal{L}^{-1}(\mathcal{L}(X_{k-1})) \cap \spn{R_{j_k}}$, then $X_k$ is a basis of $\spn{R_{j_k}}$. Choosing the appropriate ordering of the elements of $X_k$, we see that
 \[ H_k = (A_{j_{k-1} + 1} \cup \cdots \cup A_{j_k}, X_{k-1}, X_k) \]
 is a linked $q$-patch.
 
 Finally, we can define $H_p = (A_{i_{p-1} + 1} \cup \cdots \cup A_{p^{w+1}}, X_{p-1}, X_{p-1})$, which is also a linked $q$-patch.
 So we have $(A, X_1, X_{p-1}) = H_1 \times \cdots \times H_p$.
\end{proof}

\subsection*{Bounding the extremal size}

Next, we show that for every minor-closed class $\mathcal{F}$ of configurations of bounded branch-width with limiting density $\Delta$, there is a constant bound on $|ex_\mathcal{F}(n) - \Delta n|$.

\begin{lemma} \label{lem:bounded}
 For any minor-closed class $\mathcal{F}$ of configurations of bounded branch-width over a finite field $\mathbb{F}$ with limiting density $\Delta$, there is a number $c_{\ref{lem:bounded}}(\mathcal{F})$ so that $|ex_\mathcal{F}(n) - \Delta n| < c_{\ref{lem:bounded}}(\mathcal{F})$ for all $n \geq 1$.
\end{lemma}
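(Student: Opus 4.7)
The plan is to prove the two bounds $ex_\mathcal{F}(n) \leq \Delta n + c$ and $ex_\mathcal{F}(n) \geq \Delta n - c$ separately, assuming $\Delta > 0$ (for $\Delta = 0$ the extremal function is eventually bounded by a constant and the claim is immediate).

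For the lower bound, I would invoke \autoref{thm:longchains} to obtain a linked $q$-patch $H$ with $\epsilon(\widetilde{H}) = \Delta(\dim(\spn{H}) - q)$ and rooted configurations $F_n \in \mathcal{P}(H^n)$ with $\widetilde{F_n} \in \mathcal{F}$ for each $n$. Writing $B = \dim(\spn{H}) - q$, each $\widetilde{F_n}$ has rank $q + nB$ and at least $\Delta nB$ points, which handles ranks in the arithmetic progression $\{q + nB\}$. To cover all residues modulo $B$, I would work in the minor-closed class of $q$-patches $\mathcal{F}^q = \{H' : \widetilde{H'} \in \mathcal{F}\}$ and, for each $s \in \{0,1,\ldots,B-1\}$, produce a $q$-patch $G_s \in \mathcal{F}^q$ of rank $q + s$ such that $\mathcal{P}(G_s, H^{K}) \cap \mathcal{F}^q$ is non-empty for some $K$ (extracted from a suitable prefix of a large $F_N$ together with an appropriate minor operation). \autoref{thm:expand} then upgrades this to non-emptiness for every $L \geq 0$, giving configurations of every large rank with $\epsilon \geq \Delta n - O(1)$.

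For the upper bound I argue by contradiction. Suppose there is a sequence $\{A_i\} \subseteq \mathcal{F}$ with $\dim(\spn{A_i}) \to \infty$ and $\epsilon(A_i) - \Delta \dim(\spn{A_i}) \to \infty$. A direct calculation inside the proof of \autoref{lem:prunedsequenceexists} shows that pruning only increases the excess $\epsilon - \Delta \dim$ (since $d(N_t) < \Delta + \delta_i$ while only $(d(N_t) - \delta_i)$ is paid per unit of rank removed), so I may assume the sequence is pruned. For each large $i$ I apply \autoref{lem:getpathdecomposition} to write $A_i = P^{(i)}_1 \times \cdots \times P^{(i)}_{p_i}$ as a product of linked $q$-patches (with $q$ constant on a subsequence), and set $e^{(i)}_j = \epsilon(\widetilde{P^{(i)}_j}) - \Delta(\dim(\spn{P^{(i)}_j}) - q)$. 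These patch excesses sum (up to an additive $\Delta q$) to the divergent total excess of $A_i$, so the supremum of excesses of linked $q$-patches in $\mathcal{F}^q$ is infinite. By \autoref{thm:rootedwqo}, the minor-minimal linked $q$-patches in $\mathcal{F}^q$ of strictly positive excess form a finite set $\mathcal{S}$. Pigeonholing over $\mathcal{S}$ and over the positive-excess patches in each $A_i$'s decomposition, I obtain $H^* \in \mathcal{S}$ and infinitely many indices $i$ for which at least $K_0 := K_{\ref{thm:expand}}(H^*, \mathcal{F}^q)$ of the $P^{(i)}_j$ contain $H^*$ as a patch-minor. Contracting the remaining linked patches via \autoref{prop:contractinglinkedpatch} and composing the minor embeddings of $H^*$ into the chosen patches then yields an element of $\mathcal{P}((H^*)^{K_0}) \cap \mathcal{F}^q$; \autoref{thm:expand} upgrades this to elements of $\mathcal{P}((H^*)^L) \cap \mathcal{F}^q$ for every $L$, whose underlying configurations have density approaching $\epsilon(\widetilde{H^*}) / (\dim(\spn{H^*}) - q) > \Delta$, contradicting the definition of $\Delta$ as the limiting density.

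The main obstacle is the pigeonhole step: I must ensure that many patches in a \emph{single} $A_i$ carry a common $H^* \in \mathcal{S}$ as minor. When the divergent total excess is spread across many positive-excess patches, the pigeonhole over the finite set $\mathcal{S}$ is direct; but when the excess concentrates on a bounded number of large patches, one must further decompose the offending patch by an additional application of \autoref{lem:getpathdecomposition} to expose many minor-copies of a minimal $H^*$ inside it. Coordinating the choice of $H^*$ with the threshold $K_0$ it induces, and verifying that the minor embeddings of $H^*$ into the selected patches compose correctly with the \autoref{prop:contractinglinkedpatch} contractions of the unselected patches to produce a bona fide element of $\mathcal{P}((H^*)^{K_0}) \cap \mathcal{F}^q$, is the most delicate quantitative step.
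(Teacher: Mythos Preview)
Your overall architecture (split into lower and upper bounds, invoke \autoref{thm:longchains} for the former, argue by contradiction via a patch decomposition for the latter) matches the paper, but both halves are executed quite differently, and in the upper bound you are missing the idea that makes the argument go through cleanly.

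\textbf{Upper bound.} The paper does not prune the sequence and does not use \autoref{thm:expand} at all. Instead it replaces each $G_i$ by a minor that is \emph{$f$-minimal}: one may assume every proper minor $G'$ of $G_i$ satisfies $f(G') < f(G_i)$ (this costs nothing, since $f$ is bounded on bounded-rank configurations, so the ranks of the replacements still tend to infinity). With this in hand, when you write $(G_{i(n)},L,R)=H_{n,1}\times\cdots\times H_{n,n}$ via \autoref{lem:getpathdecomposition} and contract out the linked patch $H_{n,k}$ using \autoref{prop:contractinglinkedpatch}, the resulting minor $J$ has $f(\widetilde J)<f(G_{i(n)})$, which forces $\epsilon(\widetilde{H_{n,k}})>\Delta(\dim(\spn{H_{n,k}})-q)$ for \emph{every} $k$. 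So all $n$ patches already have strictly positive excess; there is no pigeonhole problem. One then takes, for each $H_{n,k}$, a minor $H'_{n,k}$ in the finite set $\mathcal{O}$ of minor-minimal positive-excess patches, forms $P_n\in\mathcal{P}(H'_{n,1},\ldots,H'_{n,n})$, and computes directly that $d(\widetilde{P_n})\geq \Delta+\delta/m+o(1)$, contradicting the definition of $\Delta$. Your route through \autoref{thm:expand} and a pigeonhole over $\mathcal{S}$ is not needed, and as you yourself note, the pigeonhole step is genuinely problematic: your pruned hypothesis only controls minors within a fixed rank window $k$, so it gives no lower bound on the excess of a patch whose rank contribution exceeds $k$, and your assertion that ``the supremum of excesses of linked $q$-patches in $\mathcal{F}^q$ is infinite'' does not follow from a divergent sum of possibly signed terms.

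\textbf{Lower bound.} Here too the paper is more direct. Given the $q$-patch $H$ and the $F_k\in\mathcal{P}(H^k)$ from \autoref{thm:longchains}, one does not need \autoref{thm:expand} or auxiliary patches $G_s$ to hit every residue class. For an arbitrary $n$, choose the least $k$ with $\dim(\spn{\widetilde{F_k}})\geq n$, locate inside $\widetilde{F_k}$ the nested subconfigurations $\widetilde{F'_j}$ for $j<k$, and take any subconfiguration $A\subseteq\widetilde{F_k}$ of rank exactly $n$ sandwiched between $\widetilde{F'_{k-1-q}}$ and $\widetilde{F_k}$. Since $f(\widetilde{F'_{k-1-q}})\geq -\Delta q$ and $n-\dim(\spn{\widetilde{F'_{k-1-q}}})\leq (q+1)\dim(\spn{H})$, this already gives $ex_{\mathcal{F}}(n)-\Delta n\geq -\Delta q-\Delta(q+1)\dim(\spn{H})$.
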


\begin{proof}
 For each configuration $A$ in $\mathcal{F}$, we define $f(A) = \epsilon(A) - \Delta \dim(\spn{A})$, so for each positive integer $n$, we have $ex_\mathcal{F}(n) - \Delta n = \max \{ f(A) : A \in \mathcal{F}, \dim(\spn{A}) = n \}$.

 First, we prove that $ex_\mathcal{F}(n) - \Delta n$ is bounded below.
 By \autoref{thm:longchains}, there is an integer $q$ and a non-trivial $q$-patch $H = (\widetilde{H}, L, R)$ such that $\widetilde{H} \cap \spn{L}$ is empty, $\epsilon(\widetilde{H}) = \Delta(\dim(\spn{H}) - q)$, and for all $k \geq 1$ there is a rooted configuration $F_k \in \mathcal{P}(H^k)$ such that $\widetilde{F_k} \in \mathcal{F}$.
 
 Note that for any $F$ in $\mathcal{P}(H^k)$, we have $\dim(\spn{\widetilde{F}}) \leq \dim(\spn{F})$ and $\dim(\spn{F}) = q + k(\dim(\spn{H}) - q)$, so
 \[ f(\widetilde{F}) \geq k\epsilon(\widetilde{H}) - \Delta (q + k (\dim(\spn{H}) - q)) = -\Delta q. \]
 We observe that for any two elements $F, F'$ of $\mathcal{P}(H^k)$, we have $|\dim(\spn{\widetilde{F}}) - \dim(\spn{\widetilde{F'}})| \leq q$.
 
 Fix some $n \geq 1$. We let $k$ be the smallest integer such that $\dim(\spn{\widetilde{F_k}}) \geq n$. 
 For each $j < k$, let $F'_j$ be an element of $\mathcal{P}(H^j)$ such that $\widetilde{F'_j}$ is a subconfiguration of $\widetilde{F_k}$.
 Since $n > \dim(\spn{\widetilde{F_{k-1}}})$, we have $n > \dim(\spn{\widetilde{F'_{k-1}}}) - q$.
 Hence $n > \dim(\spn{\widetilde{F'_{k-1-q}}})$ because $H$ is non-trivial.
 So there exists a configuration $A$ in $\mathcal{F}$ with $\dim(\spn{A}) = n$ such that $A$ is a subconfiguration of $\widetilde{F_k}$ and $\widetilde{F'_{k-1-q}}$ is a subconfiguration of $A$.
 So
 \begin{align*}
  ex_\mathcal{F}(n) - \Delta n &\geq \epsilon(A) - \Delta n \\
                               &\geq \epsilon(\widetilde{F'_{k-1-q}}) - \Delta n\\
                               &= f(\widetilde{F'_{k-1-q}}) - \Delta \left(n - \dim(\spn{\widetilde{F'_{k-1-q}}}) \right).
 \end{align*}
 However, $n \leq \dim(\spn{\widetilde{F_k}}) \leq \dim(\spn{\widetilde{F'_{k-1-q}}}) + (q+1)\dim(\spn{H})$.
 So
 \begin{align*}
  ex_\mathcal{F}(n) - \Delta n &\geq f(\widetilde{F'_{k-1-q}}) - \Delta(q+1)\dim(\spn{H}) \\
                               &\geq -\Delta q - \Delta(q+1)\dim(\spn{H}),
 \end{align*}
 which proves that $ex_\mathcal{F}(n) - \Delta n$ is bounded from below by a constant depending only on the class $\mathcal{F}$.
 
 Next, we show that $ex_\mathcal{F}(n) - \Delta n$ is bounded above.
 We assume that it is not.
 There is then a sequence of configurations $\{G_i : i \geq 1\}$ in $\mathcal{F}$ such that $\dim(\spn{G_i}) \rightarrow \infty$ and $f(G_i) \rightarrow \infty$. We may assume that, for each $i$, every proper minor $G'$ of $G_i$ satisfies $f(G') < f(G_i)$.
 
 By \autoref{lem:getpathdecomposition}, for each positive integer $n$ there is an integer $q(n)$ and a configuration $G_{i(n)}$ in this sequence such that there is a $q(n)$-patch $(G_{i(n)}, L, R)$ that is a product of $n$ non-trivial linked $q(n)$-patches, the first of which is spanning.
 
 Some value appears infinitely among the $q(n)$; call it $q$. We may then assume that $q(n) = q$ for all $n$ (we take the subsequence of configurations with this value of $q(n)$ and for each $n$ we take one that is a product of $n' \geq n$ $q$-patches and group the $n'$ $q$-patches into $n$ of them).
 For each $n$, we have a product $(G_{i(n)}, L, R) = H_{n, 1} \times \cdots \times H_{n, n}$ where each $H_{n, i}$ is a non-trivial linked $q$-patch and $H_{n, 1}$ is spanning.
 Let each $H_{n, i} = (\widetilde{H_{n, i}}, L_{n, i}, R_{n, i})$.
 We may assume that, when $k \geq 2$, the set $\widetilde{H_{n, k}} \cap \spn{L_{n, k}}$ is empty, by moving each member $e$ of this set into the $q$-patch $H_{n, j}$ for the smallest $j$ such that $e \in \spn{R_{n, j}}$.
 
 For each $n$ and $k$, there is an element $J$ of $\mathcal{P}(H_{n, 1}, \ldots, H_{n, k-1}, H_{n, k+1}, \ldots, H_{n, n})$ that is a minor of $(G_{i(n)}, L, R)$, because $H_{n, k}$ is a linked $q$-patch, by \autoref{prop:contractinglinkedpatch}.
 We have $\epsilon(\widetilde{H_{n, k}}) = \epsilon(\widetilde{G_{i(n)}}) - \epsilon(\widetilde{J})$.
 Therefore, the fact that $f(G_{i(n)}) > f(J)$ means that $\epsilon(\widetilde{H_{n, k}}) > \Delta(\dim(\spn{\widetilde{G_{i(n)}}}) - \dim(\spn{\widetilde{J}}))$.
 Since $H_{n, 1}$ is spanning, so are $G_{i(n)}$ and $J$, so $\epsilon(\widetilde{H_{n, k}}) > \Delta(\dim(\spn{G_{i(n)}}) - \dim(\spn{J})) = \Delta(\dim(\spn{H_{n, k}}) - q)$.
 
 Let $\mathcal{G}$ be the set of all non-trivial linked $q$-patches $H$ such that $\epsilon(\widetilde{H}) > \Delta(\dim(\spn{H}) - q)$ and $\widetilde{H} \in \mathcal{F}$. 
 So all the patches $H_{n, k}$ are in $\mathcal{G}$.
 Since any set of $q$-patches over $\mathbb{F}$ of bounded branch-width is well-quasi-ordered by minors, the set of minor-minimal elements of $\mathcal{G}$ is finite; call it $\mathcal{O}$.
 Define 
 \[ \delta = \min \{ \epsilon(\widetilde{H}) - \Delta(\dim(\spn{H}) - q) : H \in \mathcal{O} \} \]
 and
 \[ m = \max \{ \dim(\spn{H}) - q : H \in \mathcal{O} \}. \]

 The fact that $\mathcal{O}$ is finite means that these numbers are well-defined; we have $\delta > 0$ and $m > 0$ by the definition of $\mathcal{G}$.
 For each $n$ and each $k$, the $q$-patch $H_{n, k}$ has a minor $H'_{n, k}$ in $\mathcal{O}$. 
 For each $n$, $(G_{i(n)}, L, R)$ has a minor $P_n$ which is in $\mathcal{P}(H'_{n, 1}, \ldots, H'_{n, n})$. Also, $\widetilde{P_n} \in \mathcal{F}$ since it is a minor of $G_{i(n)}$.
 We have
 \begin{align*}
   d(\widetilde{P_n})
          &= \frac{\epsilon(\widetilde{P_n})}{\dim(\spn{\widetilde{P_n}})} = \frac{\sum_{i=1}^n \epsilon(\widetilde{H'_{n,i}})}{\dim(\spn{\widetilde{P_n}})} \\
          &\geq \frac{\sum_{i=1}^n (\delta + \Delta(\dim(\spn{H'_{n,i}}) - q))}{\dim(\spn{P_n})} \\
          &= \frac{n\delta + \Delta \sum_{i=1}^n (\dim(\spn{H'_{n,i}}) - q)}{\dim(\spn{P_n})} \\
          &= \frac{n\delta + \Delta (\dim(\spn{P_n}) - q)}{\dim(\spn{P_n})} \\
          &= \Delta + \frac{n \delta - \Delta q}{q + \sum_{i=1}^n (\dim(\spn{H'_{n,i}}) - q)} \\
          &\geq \Delta + \frac{n \delta - \Delta q}{q + n m} = \Delta + \frac{\delta - \frac{\Delta q}{n}}{\frac{q}{n} + m}.
 \end{align*}
 So $\limsup_{n \rightarrow \infty} d(\widetilde{P_n}) \geq \Delta + \frac{\delta}{m}$, which is a contradiction because $\delta/m > 0$ and $\Delta$ is the limiting density of $\mathcal{F}$.
\end{proof}

\subsection*{Characterizing the extremal configurations}

We can almost prove our main result, but need one short technical lemma.

\begin{lemma} \label{lem:getzerosubsequence}
 Let $k, P$, and $N$ be integers. 
 If $N \geq kP$ and $a_1, \ldots, a_N$ is a sequence of $N$ integers, then there are integers $m$ and $\ell$ so that $\ell \geq k$ and $\sum_{i=m+1}^{m+\ell} a_i \equiv 0 \pmod P$.
\end{lemma}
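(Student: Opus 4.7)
The plan is a pigeonhole argument on partial sums taken at multiples of $k$. Define $S_0 = 0$ and $S_j = a_1 + a_2 + \cdots + a_j$ for $j = 1, \ldots, N$, so that for any $m$ and $\ell$ with $m + \ell \leq N$,
\[ \sum_{i = m+1}^{m+\ell} a_i = S_{m+\ell} - S_m. \]
The task therefore reduces to finding two indices $m < m' \leq N$ with $m' - m \geq k$ and $S_m \equiv S_{m'} \pmod P$.

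Since $N \geq kP$, the $P+1$ values $S_0, S_k, S_{2k}, \ldots, S_{Pk}$ are all defined. By the pigeonhole principle applied to residues modulo $P$, two of them are congruent: there exist integers $0 \leq i < j \leq P$ with $S_{ik} \equiv S_{jk} \pmod P$. Setting $m = ik$ and $\ell = (j-i)k$ gives $\ell \geq k$ (since $j > i$) and $m + \ell = jk \leq Pk \leq N$, and the difference $S_{m+\ell} - S_m$ is divisible by $P$, as required.

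There is no real obstacle here; the only thing to be careful about is ensuring that the chosen sample points $S_0, S_k, \ldots, S_{Pk}$ all lie within the range $\{0, 1, \ldots, N\}$, which is exactly what the hypothesis $N \geq kP$ guarantees, and that the gap $(j-i)k$ between the two colliding indices is at least $k$, which holds automatically because distinct multiples of $k$ differ by at least $k$.
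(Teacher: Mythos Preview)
Your proof is correct and uses essentially the same pigeonhole-on-partial-sums idea as the paper. The paper considers all $N+1$ partial sums and argues that some residue class modulo $P$ must contain two indices at distance at least $k$, whereas you sample the partial sums only at multiples of $k$ and apply pigeonhole directly; this is a minor variation and, if anything, slightly cleaner.
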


\begin{proof}
 Let $b_0, b_1, \ldots, b_N$ be the sequence of partial sums; that is $b_j = \sum_{i = 0}^j a_i$ for all $j = 0, \ldots, N$.
 It suffices to show that there are $m$ and $\ell$ so that $\ell \geq k$ and $b_m \equiv b_{m+\ell} \pmod P$.
 
 For each $v \in \{0, \ldots, P-1\}$, let $i(v)$ and $j(v)$ be the minimum and maximum indices such that $b_{i(v)} \equiv v \pmod P$ and $b_{i(v)} \equiv v \pmod P$. If $j(v) - i(v) < k$ for all $v$, then it follows that $N + 1 < kP$, a contradiction. So for some $v$, we have $j(v) - i(v) \geq k$. We set $m = i(v)$ and $\ell = j(v) - i(v)$.
\end{proof}

Finally, we prove our main structural theorem, which will imply \autoref{thm:maintheorem}.

\begin{theorem} \label{thm:path}
 For each minor-closed class $\mathcal{F}$ of configurations of bounded branch-width over a finite field $\mathbb{F}$, there are integers $P$ and $M$ such that the following holds.
 For each integer $i$, there is an integer $q$ and $q$-patches $G_1, H, G_2$ such that whenever $n \equiv i \pmod P$ and $n > M$, there is a spanning $q$-patch $F$ in $\mathcal{P}(G_1, H^L, G_2)$ for some $L$ such that $\widetilde{F} \in \mathcal{F}$, $\dim(\spn{\widetilde{F}}) = n$, and $\epsilon(\widetilde{F}) = ex_\mathcal{F}(n)$.
\end{theorem}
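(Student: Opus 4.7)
The plan is to decompose, for each residue $i$ modulo a suitably chosen $P$, an infinite sequence of extremal configurations into long products of linked $q$-patches via \autoref{lem:getpathdecomposition}, and then use \autoref{thm:rootedwqo} together with \autoref{prop:contractinglinkedpatch} to contract a minor of each extremal configuration into the form $G_1 \times H^L \times G_2$ for a fixed linked $q$-patch $H$ and fixed caps $G_1, G_2$, with $L \to \infty$. Then \autoref{thm:expand} will produce elements of this form for every $L \geq 0$, and the fact that the extracted $H$ will satisfy $f(H) := \epsilon(\widetilde{H}) - \Delta(\dim(\spn{H}) - q) = 0$, together with \autoref{lem:bounded}, will force these configurations to be extremal on the residue class. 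By \autoref{cor:rational} and \autoref{lem:bounded}, $\Delta = a/b$ is rational and $ex_\mathcal{F}(n) - \Delta n$ takes only finitely many values; let $w$ bound the branch-width of configurations in $\mathcal{F}$.

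For each residue $i \pmod P$, let $\gamma_i$ be the maximum value of $ex_\mathcal{F}(n) - \Delta n$ attained infinitely often on the class $n \equiv i \pmod P$, and pick an infinite set $N_i$ of such $n$ realising $\gamma_i$, together with extremal witnesses $A_n \in \mathcal{F}$. Apply \autoref{lem:getpathdecomposition} to decompose each $A_n$ as a spanning $q_n$-patch $H_{n,1} \times \cdots \times H_{n,k_n}$ of non-trivial linked $q_n$-patches with $k_n \to \infty$ and $H_{n,1}$ spanning; pigeonhole $q_n \in \{0, \ldots, w\}$ to a fixed $q$. As in the proof of \autoref{lem:bounded}, arrange $\widetilde{H_{n,l}} \cap \spn{L_{n,l}} = \emptyset$ for $l \geq 2$ so that $\epsilon$ is additive across the product. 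Then \autoref{lem:bounded} combined with \autoref{prop:contractinglinkedpatch} applied to delete each individual linked patch shows that each factor $H_{n,l}$ has bounded excess $f(H_{n,l})$, and similarly every prefix and suffix configuration of the decomposition has bounded excess.

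The factors $H_{n,l}$ lie in the family of linked $q$-patches of branch-width at most $w$, which is well-quasi-ordered by \autoref{thm:rootedwqo}. A Higman-style extraction therefore gives a subsequence of $n$'s and a fixed linked $q$-patch $H$ such that $H$ is a minor of $H_{n,l}$ for $L_n \to \infty$ indices $l \in \{2, \ldots, k_n\}$. Iterated application of \autoref{prop:contractinglinkedpatch} then contracts the remaining (non-selected) $H_{n,l}$'s out of the product and further contracts each retained $H_{n,l}$ down to $H$, yielding a minor of $A_n$ in $\mathcal{F}$ of the exact form $G_{n,1} \times H^{L_n}$ with $G_{n,1}$ a spanning minor of $H_{n,1}$. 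A further WQO argument stabilises $G_{n,1}$ along a subsequence to a fixed spanning $q$-patch $G_1$; taking the trivial $q$-patch for $G_2$, I obtain $F_n \in \mathcal{P}(G_1, H^{L_n}, G_2)$ with $\widetilde{F_n} \in \mathcal{F}$. The identity $f(\widetilde{F_n}) = f(G_1) + L_n f(H) + f(G_2) - \Delta q$, combined with the boundedness of $f(\widetilde{F_n})$ from \autoref{lem:bounded} and with $L_n \to \infty$, forces $f(H) = 0$, that is, $\epsilon(\widetilde{H}) = \Delta(\dim(\spn{H}) - q)$.

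Finally, \autoref{thm:expand} applied to $H$ in the minor-closed class of $q$-patches whose underlying configurations lie in $\mathcal{F}$ produces $F_L \in \mathcal{P}(G_1, H^L, G_2)$ with $\widetilde{F_L} \in \mathcal{F}$ for every $L \geq 0$; since $f(H) = 0$, all of these share the constant excess $\gamma_i$. The ranks of the $\widetilde{F_L}$ form an arithmetic progression with common difference $d = \dim(\spn{H}) - q$, so by choosing $P$ as a common multiple of $b$ and of the (finitely many, via \autoref{thm:rootedwqo} applied to minor-minimal admissible triples $(q, G_1, H, G_2)$) possible values of $d$ occurring across all residues, every sufficiently large $n \equiv i \pmod P$ is realised as $\dim(\spn{\widetilde{F_L}})$ for some $L$. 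Hence $ex_\mathcal{F}(n) \geq \Delta n + \gamma_i$, with equality following from the maximality of $\gamma_i$ among values of $ex_\mathcal{F}(n) - \Delta n$ attained infinitely often on the class. The main obstacle is the middle step: ensuring that a single linked $q$-patch $H$ appears in enough positions of the decomposition, with controllable caps $G_1, G_2$, so that \autoref{thm:expand} applies. This is precisely where \autoref{prop:contractinglinkedpatch} is indispensable, since it allows both the removal of unwanted intermediate patches and the replacement of each chosen factor by an exact copy of a common minor $H$, without disturbing the rest of the product.
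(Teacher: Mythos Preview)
Your overall architecture---decompose extremal configurations via \autoref{lem:getpathdecomposition}, extract a recurring linked patch $H$, and invoke \autoref{thm:expand}---matches the paper's. But two genuine gaps prevent the argument from closing.

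\textbf{The $f$-value is not preserved.} You construct $F_n \in \mathcal{P}(G_1, H^{L_n}, G_2)$ as a minor of the extremal $A_n$ by (a) contracting out all non-selected intermediate patches $H_{n,l}$, (b) replacing each selected $H_{n,l}$ by its minor $H$, and (c) stabilising $G_{n,1}$ to a fixed $G_1$ via a further minor. Each of these steps changes $f$: removing a linked patch $H_{n,l}$ subtracts $g_q(H_{n,l}) = \epsilon(\widetilde{H_{n,l}}) - \Delta(\dim(\spn{H_{n,l}}) - q)$ from $f$, and you only know these quantities are \emph{bounded}, not zero. So the common $f$-value $c$ of your $F_L$'s satisfies $c \le \gamma_i$, but you have no reason for equality; the assertion ``all of these share the constant excess $\gamma_i$'' is unjustified, and without it the $F_L$'s need not be extremal. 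The paper's cure is its Claim~(3): using rationality (so $|g_q| \ge 1/b$ whenever $g_q \ne 0$) together with \autoref{lem:bounded}, it shows that in any sufficiently long product of linked patches, all but a bounded number have $g_q = 0$, and it then restricts \emph{all} subsequent contractions and replacements to a consecutive stretch of such zero-excess patches. This is exactly what guarantees $f$ is preserved.

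\textbf{The period $P$ is defined circularly.} You fix residue classes modulo $P$ at the outset, but only specify $P$ at the end as a common multiple of the step sizes $d = \dim(\spn{H}) - q$ that emerge from the per-residue analysis. Even granting a finiteness argument for the possible $d$'s, you still need the dimensions of your $F_L$'s to land in the \emph{correct} residue class $i \pmod P$, and your minor operations give no control over $\dim(\spn{\widetilde{F_L}}) \pmod P$. The paper defines $P$ up front as $\prod_{q \le w} \prod_{H \in \mathcal{S}_q} (\dim(\spn{H}) - q)$, where $\mathcal{S}_q$ is the finite set of minor-minimal zero-excess $q$-patches; any $H$ later extracted has a minor in $\mathcal{S}_q$, so its step divides $P$ automatically. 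It then uses the pigeonhole \autoref{lem:getzerosubsequence} twice to ensure that each replacement of a stretch of patches by minors changes the total dimension by a multiple of $P$, keeping the residue class fixed throughout. Your sketch has no analogue of this dimension-tracking step.
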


\begin{proof}
 Let $w$ be the maximum branch-width of configurations in $\mathcal{F}$ and let $\Delta$ be the limiting density of $\mathcal{F}$.
 We define $f(G) = \epsilon(G) - \Delta \dim(\spn{G})$ for each configuration $G$, so $ex_\mathcal{F}(n) - \Delta n = \max\{ f(G) : G \in \mathcal{F}, \dim(\spn{G}) = n \}$.
 For a rooted configuration $H$ and number $q$, we define $g_q(H) = \epsilon(\widetilde{H}) - \Delta(\dim(\spn{H}) - q)$.
 
 \begin{claim} \label{clm:computef}
 If $J, G$ and $H$ are $q$-patches such that $J \in \mathcal{P}(G, H)$ and $G$ is a spanning patch, then $f(\widetilde{J}) = f(\widetilde{G}) + g_q(H)$.
 \end{claim}
 
 We have 
 \begin{align*}
 \dim(\spn{J}) &= \dim(\spn{G}) + \dim(\spn{H}) - q \\
               &= \dim(\spn{\widetilde{G}}) + \dim(\spn{H}) - q \\
 \epsilon(\widetilde{J}) - \Delta \dim(\spn{J}) &= \epsilon(\widetilde{G}) - \Delta\dim(\spn{\widetilde{G}}) + \epsilon(\widetilde{H}) - \Delta(\dim(\spn{H}) - q),
 \end{align*}
 where the last line follows because $\epsilon(\widetilde{J}) = \epsilon(\widetilde{G}) + \epsilon(\widetilde{H})$.
 But the fact that $G$ is spanning implies that $J$ is, which proves (\ref{clm:computef}).
 \\

 Let $\mathcal{T}_q$ be the set of all non-trivial linked $q$-patches $H = (\widetilde{H}, L, R)$ such that $\widetilde{H} \in \mathcal{F}$, $g_q(H) = 0$ and $\widetilde{H} \cap \spn{L}$ is empty.
 Since the $q$-patches over $\mathbb{F}$ of branch-width at most $w$ are well-quasi-ordered by the minor relation, the set of minor-minimal members of $\mathcal{T}_q$ is finite; call it $\mathcal{S}_q$.
 Let $\mathcal{S} = \cup_{q = 0}^w \mathcal{S}_q$.
 Let 
 \[ P = \prod_{q = 0}^w \prod_{H \in \mathcal{S}_q} (\dim(\spn{H}) - q) \text{ and } K = \max\{ K_{\ref{thm:expand}}(H, \mathcal{F}) : H \in \mathcal{S} \}, \]
 so $K$ is the maximum of the integers $K_{\ref{thm:expand}}(H, \mathcal{F})$ given by \autoref{thm:expand} for all the patches $H$ in $\mathcal{S}$.
 Note that $P > 0$ because the patches in $\mathcal{S}$ are all non-trivial.
 We will show that $ex_\mathcal{F}(n) - \Delta n$ is periodic with period $P$ (except possibly on finitely many values of $n$).

 \begin{claim} \label{clm:nottoosmall}
  There is a positive integer $b$ such that, for every integer $q$ and every rooted configuration $H$, if $|g_q(H)| < 1/b$, then $g_q(H) = 0$.
 \end{claim}

 By \autoref{cor:rational}, $\Delta$ is a rational number; say $\Delta = a/b$ for some integers $a$ and $b$ with $b > 0$. Then $g_q(H) = \epsilon(\widetilde{H}) - \Delta(\dim(\spn{H}) - q)$ is a ratio of integers with denominator $b$, which proves (\ref{clm:nottoosmall}).
 \\

 Let $c = c_{\ref{lem:bounded}}(\mathcal{F})$ be the integer given by \autoref{lem:bounded} so that $|f(G)| < c$ for all $G$ in $\mathcal{F}$.
 Let $b$ be given by (\ref{clm:nottoosmall}).
 We set 
 \[ t = b \cdot \lceil c + \Delta w + w\rceil \text{ and } p = K P^2 |\mathcal{S}| (2t+1) + 2t .\]

 Let $N = 2^{(|\mathbb{F}|^w + 1)p^{w+1}}$.
 Recall that, by \autoref{lem:getpathdecomposition}, for any configuration $A$ in $\mathcal{F}$ with $\epsilon(A) \geq N$ there is an integer $q$ in $\{0, \ldots, w\}$ and a $q$-patch $H$ with $\widetilde{H} = A$ that is a product of $p$ non-trivial linked $q$-patches, the first of which is spanning.
 The purpose of the next three claims is to show that, for any $n > N$, $ex_\mathcal{F}(n)$ is attained by some product of the form $G_1 \times H^L \times G_2$ where $L \geq K$.

 \begin{claim} \label{clm:productofzeropatches}
  Let $m = K P^2 |\mathcal{S}|$ and let $A$ be a configuration in $\mathcal{F}$ with $\epsilon(A) \geq N$.
  There is an integer $q$ in $\{0, \ldots, w\}$ and there are $q$-patches $G, G_1, G_2, H_1, \ldots, H_m$ such that 
  \begin{itemize}
   \item $A = \widetilde{G}$,
   \item $G \in \mathcal{P}(G_1, H_1, \ldots, H_m, G_2)$,
   \item $G_1$ is spanning, and, 
   \item $H_i \in \mathcal{T}_q$ for all $i = 1, \ldots, m$.
  \end{itemize}
 \end{claim}

 Recall that, since $\epsilon(A) \geq N$, \autoref{lem:getpathdecomposition} implies that there is an integer $q$ in $\{0, \ldots, w\}$ and a sequence of $p$ non-trivial linked $q$-patches $(H_1, \ldots, H_p)$ such that $H_1$ is spanning and $A = \widetilde{G}$ for some $G$ in $\mathcal{P}(H_1, \ldots, H_p)$.
 We may assume that for each $H_i = (\widetilde{H_i}, L_i, R_i)$, if $i \geq 2$ then $\widetilde{H_i} \cap \spn{L_i}$ is empty, by moving elements $e$ of this set into $\widetilde{H_j}$ for the smallest $j$ with $e \in \spn{R_j}$.
 
 Consider any $t$ of these $q$-patches, say $H_{i_1}, \ldots, H_{i_t}$.
 The fact that all the patches $H_1, \ldots, H_p$ are linked means that $G$ has a minor $G'$ in $\mathcal{P}(H_{i_1}, \ldots, H_{i_t})$.
 We can create a spanning $q$-patch $H'_{i_1}$ out of $H_{i_1}$ by adding $q$ new elements to $\widetilde{H_{i_1}}$ parallel to the left terminals $L_{i_1}$; so $\widetilde{H_{i_1}}$ is a subconfiguration of $\widetilde{H'_{i_1}}$, $\dim(H'_{i_1}) = \dim(H_{i_1})$, and there is a patch $G''$ in $\mathcal{P}(H'_{i_1}, \ldots, H_{i_t})$ such that $\widetilde{G'}$ is a subconfiguration of $\widetilde{G''}$. So $\epsilon(\widetilde{G''}) = \epsilon(\widetilde{G'}) + q$ and $\dim(\spn{\widetilde{G''}}) - q \leq \dim(\spn{\widetilde{G'}}) \leq \dim(\spn{\widetilde{G''}})$.
 Hence
 \[ f(\widetilde{G''}) - q \leq f(\widetilde{G'}) \leq f(\widetilde{G''}) + \Delta q - q .\]
 Now, by (\ref{clm:computef}), we have 
 \begin{align*}
  f(\widetilde{G''}) &= f(\widetilde{H'_{i_1}}) + \sum_{j=2}^t g_q(H_{i_j}) \\
                     &= -\Delta q + g_q(H'_{i_1}) + \sum_{j=2}^t g_q(H_{i_j}) \label{eq:1}
 \end{align*}
 where the second equality follows from the fact that $\dim(\spn{H'_{i_1}}) = \dim(\spn{\widetilde{H'_{i_1}}})$. Then, since $g_q(H_{i_1}) \leq g_q(H'_{i_1}) \leq g_q(H_{i_1}) + q$, we have 
 \[ -\Delta q + \sum_{j=1}^t g_q(H_{i_j}) \leq f(\widetilde{G''}) \leq -\Delta q + q + \sum_{j=1}^t g_q(H_{i_j}), \]
 and so
 \[ -q - \Delta q + \sum_{j=1}^t g_q(H_{i_j}) \leq f(\widetilde{G'}) \leq \sum_{j=1}^t g_q(H_{i_j}) .\]
 If $g_q(H_{i_j}) > 0$ for all $j = 1, \ldots, t$, then by (\ref{clm:nottoosmall}) and the definition of $t$, it follows that $f(\widetilde{G'}) \geq c$, a contradiction to \autoref{lem:bounded}.
 On the other hand, if $g_q(H_{i_j}) < 0$ for all $j = 1, \ldots, t$, then by (\ref{clm:nottoosmall}) and the definition of $t$, it follows that $f(\widetilde{G'}) \leq -c$, a contradiction to \autoref{lem:bounded}.
 
 This proves that $g_q(H_i) > 0$ for fewer than $t$ of the patches $H_i$ and that $g_q(H_i) < 0$ for fewer than $t$ of the patches $H_i$.
 Hence all but at most $2t$ of the patches $H_1, \ldots, H_p$ are in $\mathcal{T}_q$.

 Note that $m = (p-2t)/(2t+1)$. So there is a stretch $H_{k+1}, H_{k+2}, \ldots, H_{k+m}$ of these patches such that $H_{k+1}, \ldots, H_{k+m} \in \mathcal{T}_q$.
 Since $G \in \mathcal{P}(H_1, \ldots, H_p)$, there are elements $G_1$ of $\mathcal{P}(H_1, \ldots, H_k)$ and $G_2$ of $\mathcal{P}(H_{k+m+1}, \ldots, H_p)$ such that $G \in \mathcal{P}(G_1, H_{k+1}, \ldots, H_{k+m}, G_2)$. The fact that $H_1$ is spanning implies that $G_1$ is.
 This proves (\ref{clm:productofzeropatches}).

 \begin{claim} \label{clm:productofminimalpatches}
  Let $s = K P |\mathcal{S}|$ and let $A$ be a configuration in $\mathcal{F}$ with $\epsilon(A) \geq N$.
  There is an integer $q$ in $\{0, \ldots, w\}$ and there are $q$-patches $G, G_1, G_2, H_1, \ldots, H_s$ such that
  \begin{itemize}
   \item $\widetilde{G}$ is a minor of $A$,
   \item $G \in \mathcal{P}(G_1, H_1, \ldots, H_s, G_2)$,
   \item $G_1$ is spanning,
   \item $H_1, \ldots, H_s \in \mathcal{S}_q$, and   
   \item $f(\widetilde{G}) = f(A)$ and $\dim(\spn{G}) \equiv \dim(\spn{A}) \pmod P$.
  \end{itemize}
 \end{claim}

 Consider the integer $q$ and the $q$-patches $G, G_1, G_2, H_1, \ldots, H_m$ given by (\ref{clm:productofzeropatches}).
 For $i = 1, \ldots, m$, the $q$-patch $H_i$ has a minor $H'_i$ in $\mathcal{S}_q$.
 We set $a_i = \dim(\spn{H_i}) - \dim(\spn{H'_i})$.
 Since $m = sP$, it follows from \autoref{lem:getzerosubsequence} that there is some subsequence $a_{j+1}, \ldots, a_{j+s'}$ such that $s' \geq s$ and $\sum_{i=j+1}^{j+s'} a_i \equiv 0 \pmod P$. 

 Then $G$ has a minor $G'$ such that
 \[ G' \in \mathcal{P}(G_1, H_1, \ldots, H_j, H'_{j+1}, \ldots, H'_{j+s'}, H_{j+s'+1}, \ldots, H_m, G_2) \]
 and such that $\dim(\spn{G'}) \equiv \dim(\spn{G}) \pmod P$. Since $G_1$ is spanning, so are $G$ and $G'$, so $\dim(\spn{\widetilde{G'}}) \equiv \dim(\spn{A})  \pmod P$. Also, $\widetilde{G'}$ is a minor of $A$.

 We also have $f(\widetilde{G'}) = f(\widetilde{G}) = f(A)$ by (\ref{clm:computef}) because $g_q(H_i) = g_q(H'_i) = 0$ for all $i$.
 There are $q$-patches $G'_1$ in $\mathcal{P}(G_1, H_1, \ldots, H_j)$ and $G'_2$ in $\mathcal{P}(H'_{j+s+1}, \ldots, H_m, G_2)$ such that $G' \in \mathcal{P}(G'_1, H'_{j+1}, \ldots, H'_{j+s}, G'_2)$.
 Note that $G'_1$ is spanning because $G_1$ is.
 So (\ref{clm:productofminimalpatches}) holds with $G', G'_1, G'_2, H'_{j+1}, \ldots, H'_{j+s}$ in place of $G, G_1, G_2, H_1, \ldots, H_s$. 
 
 \begin{claim} \label{clm:repeatedpatch}
  Let $A$ be a configuration in $\mathcal{F}$ with $\epsilon(A) \geq N$.
  There is an integer $q$ in $\{0, \ldots, w\}$, $q$-patches $G, G_1, G_2, H$, and an integer $K'$ such that $K' \geq K$,
  \begin{itemize}
   \item $\widetilde{G}$ is a minor of $A$,
   \item $G \in \mathcal{P}(G_1, H^{K'}, G_2)$,
   \item $G_1$ is spanning,
   \item $H \in \mathcal{S}_q$, and
   \item $f(\widetilde{G}) = f(A)$ and $\dim(\spn{G}) \equiv \dim(\spn{A}) \pmod P$.
  \end{itemize}
 \end{claim}

 Consider the integer $q$ and the $q$-patches $G, G_1, G_2, H_1, \ldots, H_s$ given by (\ref{clm:productofminimalpatches}).
 Since $\mathcal{S}_q$ is finite, the patches $H_1, \ldots, H_s$ fall into at most $|\mathcal{S}_q|$ isomorphism classes.
 There is a $q$-patch $H$ in $\mathcal{S}_q$ so that at least $s / |\mathcal{S}_q| \geq s / |\mathcal{S}| = KP$ of these patches are isomorphic to $H$; let $H_{i_1}, \ldots, H_{i_{KP}}$ be a subsequence consisting of some $KP$ of them.

 Define the sequence $a_1, \ldots, a_{KP-1}$ by setting 
 \[ a_j = \sum_{\ell = i_j + 1}^{i_{j+1} - 1} (\dim(\spn{H_\ell}) - q) \]
 for each $j = 1, \ldots, KP-1$.
 By \autoref{lem:getzerosubsequence}, it has a subsequence $a_{m+1}, \ldots, a_{m+L}$ where $\sum_{\ell=m+1}^{m+L} a_\ell \equiv 0 \pmod P$ for some $L \geq K-1$.
 
 Since all patches in $\mathcal{S}_q$ are linked, $G$ has a minor $G' = G'_1 \times G^* \times G'_2$ where
 \begin{align*}
  G'_1 &\in \mathcal{P}(G_1, H_1, \ldots, H_{i_{m+1} - 1}), \\
  G^* &\in \mathcal{P}(H_{i_{m+1}}, H_{i_{m+2}}, H_{i_{m+3}}, \ldots, H_{i_{m+L+1}}), \\
  G'_2 &\in \mathcal{P}(H_{i_{m+L+1}+1}, \ldots, H_s, G_2).
 \end{align*}
 That is, we have taken the product defining $G$ and removed from it all the terms $H_j$ where $j$ lies in the interval $(i_{m+1}, i_{m+L+1})$ and is not actually one of the values $i_{m+1}, i_{m+2}, \ldots, i_{m+L}, i_{m+L+1}$.
 Note that $\widetilde{G'}$ is a minor of $A$.
 Since each of $H_{i_{m+1}}, \ldots, H_{i_{m+L+1}}$ is isomorphic to $H$, we have
 $G^* \in \mathcal{P}(H^{L+1})$ and $G' \in \mathcal{P}(G_1, H_1, \ldots, H_{i_{m+1} - 1}, H^{L+1}, H_{i_{m+L+1}+1}, \ldots, H_s, G_2)$.
  
 We have $\dim(\spn{G'}) \equiv \dim(\spn{G}) \equiv \dim(\spn{A}) \pmod P$ because $\sum_{\ell=m+1}^{m+L} a_\ell \equiv 0 \pmod P$.
 Since $G_1$ is spanning, so are $G$, $G'_1$, and $G'$. Therefore, it follows from the fact that $g_q(H_i) = 0$ for all patches $H_i$ that $f(\widetilde{G'}) = f(\widetilde{G}) = f(A)$.
 Since $G' \in \mathcal{P}(G'_1, H^{L+1}, G'_2)$, claim (\ref{clm:repeatedpatch}) holds with $G', G'_1, G'_2, L+1$ in place of $G, G_1, G_2, K'$.
 \\

 We are now equipped to finish the proof.
 Define the function $f'$ by setting $f'(n) = ex_\mathcal{F}(n) - \Delta n$ for all $n$.
 Since $f'$ is bounded (\autoref{lem:bounded}), there is an integer $M$ such that, for each $i$ in $\{0, \ldots, P-1\}$, 
 \[ \max\{f'(n) : n > M, n \equiv i \pmod P \} = f'(n_i) \]
 for some $n_i$ with $n_i \equiv i \pmod P$ and $N < n_i < M$.
 
 Fix an integer $i$ in $\{0, \ldots, P-1\}$.
 Let $A$ be a configuration in $\mathcal{F}$ maximizing $f(A)$ subject to $\dim(\spn{A}) = n_i$. So $f(A) = f'(n_i)$.
 
 We have $\epsilon(A) \geq \dim(\spn{A}) = n_i > N$.
 So we can apply (\ref{clm:repeatedpatch}); let $q, G, G_1, G_2, H$ and $K'$ be as given.
 Then $\dim(\spn{G}) \equiv n_i \equiv i \pmod P$.

 Let $\mathcal{F}^*$ be the set of $q$-patches $G$ such that $\widetilde{G} \in \mathcal{F}$. It is minor closed.
 Thus by the definition of $K$ and \autoref{thm:expand} applied to $\mathcal{F}^*$, it follows that for any $L \geq 0$ there is an element $G'_L$ of $\mathcal{P}(G_1, H^L, G_2)$ with $\widetilde{G'_L} \in \mathcal{F}$.
 
 The fact that $H \in \mathcal{S}_q$ means that $\dim(\spn{H}) - q$ divides $P$. This means that for any integer $n$ such that $n > M > n_i$ and $n \equiv \dim(\spn{G}) \equiv i \pmod P$, there is an integer $L$ and an element $F_n = G'_L$ of $\mathcal{P}(G_1, H^L, G_2)$ with $\dim(\spn{F_n}) = n$ and $\widetilde{F_n} \in \mathcal{F}$.
 Since $g_q(H) = 0$ and $G_1$ is spanning, (\ref{clm:computef}) implies that $f(\widetilde{F_n}) = f(\widetilde{G}) = f(A) = f'(n_i)$. Hence $f(\widetilde{F_n}) \geq f'(n) = ex_\mathcal{F}(n) - \Delta n$.
 Since $F_n$ is spanning, $\dim(\spn{\widetilde{F_n}}) = n$, and it follows that $\epsilon(\widetilde{F_n}) = ex_\mathcal{F}(n)$.
\end{proof}

We can easily prove \autoref{thm:maintheorem} as a corollary of \autoref{thm:path}.

\begin{proof}[Proof of \autoref{thm:maintheorem}]
 It is equivalent to prove the theorem for a minor-closed class of configurations $\mathcal{F}$ over $\mathbb{F}$ of bounded branch-width: let $\mathcal{F}$ be the closure under minors of the set of configurations $\{A : M(A) \in \mathcal{M}\}$ (we only need to explicitly close this under minors because if $A'$ is a minor of $A$, then $M(A')$ may contain loops that are not present in the corresponding minor of $M(A)$).
 
 Applying \autoref{thm:path}, there are integers $p$ and $m$ such that, for each $i$ in $\{0, \ldots, p-1\}$ there is an integer $q$ and $q$-patches $G_1, H, G_2$  such that whenever $n$ is an integer congruent to $i$ mod $p$ and $n > m$, there is an integer $L$ and a spanning $q$-patch $F$ in $\mathcal{P}(G_1, H^L, G_2)$ such that $\widetilde{F} \in \mathcal{F}$, $\dim(\spn{F}) = n$, and $\epsilon(\widetilde{F}) = ex_\mathcal{F}(n)$.
 
 Fix an $i$ in $\{0, \ldots, p-1\}$ and consider the resulting integer $q$ and $q$-patches $G_1, H, G_2$. Let $n$ be an integer congruent to $i$ mod $p$ with $n > m$.
 Let $\Delta = \epsilon(\widetilde{H}) / (\dim(\spn{H}) - q)$. 
 We have an integer $L$ and a spanning $q$-patch $F$ in $\mathcal{P}(G_1, H^L, G_2)$ such that $\widetilde{F} \in \mathcal{F}$, $\dim(\spn{\widetilde{F}}) = n$, and $\epsilon(\widetilde{F}) = ex_\mathcal{F}(n)$. Then
 \begin{align*}
  \epsilon(\widetilde{F}) &= \epsilon(\widetilde{G_1}) + \epsilon(\widetilde{G_2}) + L \epsilon(\widetilde{H}) \\
                          &= \epsilon(\widetilde{G_1}) + \epsilon(\widetilde{G_2}) + L \Delta(\dim(\spn{H}) - q). \\
 \intertext{ Also, }
  n &= \dim(\spn{\widetilde{F}}) = \dim(\spn{F}) \\
    &= \dim(\spn{G_1}) + L(\dim(\spn{H})- q) + \dim(\spn{G_2}) - q.
 \end{align*}
 Therefore, $\epsilon(\widetilde{F}) = \epsilon(\widetilde{G_1}) + \epsilon(\widetilde{G_2}) + \Delta (n - \dim(\spn{G_1}) - \dim(\spn{G_2}) + q)$.
 So the theorem follows by setting $a_i = \epsilon(\widetilde{G_1}) + \epsilon(\widetilde{G_2}) - \Delta(\dim(\spn{G_1}) + \dim(\spn{G_2}) - q)$.
\end{proof}

\section*{Acknowledgements}

I thank Sergey Norin, with whom I have been working on a wider project on densities of minor-closed classes of graphs, for sharing many ideas on this topic that could also be applied to matroids.
I also thank Jim Geelen for telling me how to prove \autoref*{thm:rootedwqo}.

\end{document}